\providecommand{\U}[1]{\protect\rule{.1in}{.1in}}
\numberwithin{equation}{section}
\newtheorem{theorem}{Theorem}[section]
\newtheorem{corollary}{Corollary}[section]
\newtheorem{lemma}{Lemma}[section]
\newtheorem{proposition}{Proposition}[section]
\newtheorem{remark}{Remark}[section]
\newtheorem{definition}{Definition}[section]
\numberwithin{equation}{section}
\newcommand{\bbr}{\mathbb{R}}
\newcommand{\bbn}{\mathbb{N}}
\newcommand{\ve}{\varepsilon}
\newcommand{\bd}{\begin{definition}}
\newcommand{\ed}{\end{definition}}
\newcommand{\br}{\begin{remark}}
\newcommand{\er}{\end{remark}}
\newcommand{\be}{\begin{equation}}
\newcommand{\ee}{\end{equation}}
\newcommand{\bc}{\begin{corollary}}
\newcommand{\ec}{\end{corollary}}
\begin{document}

\title[Caffarelli-Kohn-Nirenberg inequality]{Stability of the Caffarelli-Kohn-Nirenberg inequality: the existence of minimizers}

\author[J. Wei]{Juncheng Wei}
\address{\noindent Department of Mathematics, University of British Columbia,
Vancouver, B.C., Canada, V6T 1Z2}
\email{jcwei@math.ubc.ca}

\author[Y.Wu]{Yuanze Wu}
\address{\noindent  School of Mathematics, China
University of Mining and Technology, Xuzhou, 221116, P.R. China }
\email{wuyz850306@cumt.edu.cn}

\begin{abstract}
In this paper, we consider the following variational problem:
\begin{eqnarray*}
\inf_{u\in D^{1,2}_a(\bbr^N)\backslash\mathcal{Z}}\frac{\|u\|^2_{D^{1,2}_a(\bbr^N)}-C_{a,b,N}^{-1}\|u\|^2_{L^{p+1}(|x|^{-b(p+1)},\bbr^N)}}{dist_{D^{1,2}_{a}}^2(u, \mathcal{Z})}:=c_{BE},
\end{eqnarray*}
where $N\geq2$, $b_{FS}(a)<b<a+1$ for $a<0$ and $a\leq b<a+1$ for $0\leq a<a_c:=\frac{N-2}{2}$ and $a+b>0$ with $b_{FS}(a)$ being the Felli-Schneider curve, $p=\frac{N+2(1+a-b)}{N-2(1+a-b)}$, $\mathcal{Z}= \{ c \tau^{a_c-a}W(\tau x)\mid c\in\bbr\backslash\{0\}, \tau>0\}$ and up to dilations and scalar multiplications, $W(x)$, which is positive and radially symmetric, is the unique extremal function of the following classical Caffarelli-Kohn-Nirenberg (CKN for short) inequality
\begin{eqnarray*}
\bigg(\int_{\bbr^N}|x|^{-b(p+1)}|u|^{p+1}dx\bigg)^{\frac{2}{p+1}}\leq C_{a,b,N}\int_{\bbr^N}|x|^{-2a}|\nabla u|^2dx
\end{eqnarray*}
with $C_{a,b,N}$ being the optimal constant.  It is known in \cite{WW2022} that $c_{BE}>0$.  In this paper, we prove that the above variational problem has a minimizer for $N\geq2$ under the following two assumptions:
\begin{enumerate}
\item[$(i)$]\quad $a_c^*\leq a<a_c$ and $a\leq b<a+1$,
\item[$(ii)$]\quad $a<a_c^*$ and $b_{FS}^*(a)\leq b<a+1$,
\end{enumerate}
where $a_c^*=\bigg(1-\sqrt{\frac{N-1}{2N}}\bigg)a_c$ and 
\begin{eqnarray*}
b_{FS}^*(a)=\frac{(a_c-a)N}{a_c-a+\sqrt{(a_c-a)^2+N-1}}+a-a_c.
\end{eqnarray*}
Our results extend that of Konig in \cite{K2023} for the Sobolev inequality to the CKN inequality.  Moreover, we believe that our assumptions~$(i)$ and $(ii)$ are optimal for the existence of minimizers of the above variational problem.

\vspace{3mm} \noindent{\bf Keywords:} Caffarelli-Kohn-Nirenberg inequality; Bianchi-Egnell type stability; Existence of minimizers.

\vspace{3mm}\noindent {\bf AMS} Subject Classification 2010: 35B09; 35B33; 35B40; 35J20.%

\end{abstract}

\date{}
\maketitle

\section{Introduction}
Let $D^{1,2}_{a}(\bbr^N)$ be the Hilbert space given by
\begin{eqnarray}\label{eqn886}
D^{1,2}_{a}(\bbr^N)=\{u\in D^{1,2}(\bbr^N)\mid \int_{\bbr^N}|x|^{-2a}|\nabla u|^2dx<+\infty\}
\end{eqnarray}
with the inner product
\begin{eqnarray*}
\langle u,v \rangle_{D^{1,2}_{a}(\bbr^N)}=\int_{\bbr^N}|x|^{-2a}\nabla u\nabla vdx
\end{eqnarray*}
and $D^{1,2}(\bbr^N)=\dot{W}^{1,2}(\bbr^N)$ being the usual homogeneous Sobolev space (cf. \cite[Definition~2.1]{FG2021}).   Then the classical Caffarelli-Kohn-Nirenberg (CKN for short) inequality, established by Caffarelli, Kohn and Nirenberg in the celebrated paper \cite{CKN1984} in a more general version, states that
\begin{eqnarray}\label{eq0001}
\bigg(\int_{\bbr^N}|x|^{-b(p+1)}|u|^{p+1}dx\bigg)^{\frac{2}{p+1}}\leq C_{a,b,N}\int_{\bbr^N}|x|^{-2a}|\nabla u|^2dx,
\end{eqnarray}
for all $u\in D_a^{1,2}(\bbr^N)$, where $-\infty<a<a_c:=\frac{N-2}{2}$, $p=\frac{N+2(1+a-b)}{N-2(1+a-b)}$ and 
\begin{eqnarray*}
\left\{\aligned&a+\frac{1}{2}<b<a+1,\quad N=1,\\
&a<b<a+1,\quad N=2,\\
&a\leq b<a+1,\quad N\geq3.
\endaligned
\right.
\end{eqnarray*}
Here, for the sake of simplicity, we denote $a_c=\frac{N-2}{2}$, as that in \cite{DELT2009,DEL2012,DEL2016}.

\vskip0.12in

As that of many famous functional inequalities such as the Sobolev inequality, the Hardy-Littlewood-Sobolev inequality, the Gagliardo-Nirenberg-Sobolev inequality, the Euclidean logarithmic Sobolev inequality and so on, as a generalization of the Sobolev and Hardy-Sobolev inequalities, the CKN inequality~\eqref{eq0001} is also very helpful in understanding various problems in lots of mathematical fields, such as nonlinear partial differential equations, calculus of variations, geometric analysis, the theory of probability to mathematical physics and so on.  For this purpose, a fundamental task in understanding the CKN inequality~\eqref{eq0001} is to study the optimal constant, the classification of extremal functions, as well as their qualitative properties for parameters in the full region.  Under the above conditions, it is well known (cf. \cite{A1976,CC1993,CW2001,L1983,T1976}) that the CKN inequality~\eqref{eq0001} has extremal functions if and only if under the following assumptions:
\begin{enumerate}
\item\quad $a<b<a+1$ and $a<0$ for $N\geq2$,
\item\quad $a+\frac12<b<a+1$ and $a<0$ for $N=1$,
\item\quad $a\leq b<a+1$ and $0\leq a<a_c$ for $N\geq3$.
\end{enumerate}
Moreover, let
\begin{eqnarray}\label{eqn991}
b_{FS}(a)=\frac{N(a_c-a)}{2\sqrt{(a_c-a)^2+(N-1)}}+a-a_c>a
\end{eqnarray}
be the Felli-Schneider curve found in \cite{FS2003},
then it is also well known (cf. \cite{A1976,CC1993,DELT2009,DEL2012,DEL2016,DET2008,FS2003,L1983,T1976}) that up to dilations $\tau^{a_c-a}u(\tau x)$ and scalar multiplications $Cu(x)$ (also up to translations $u(x+y)$ in the special case $a=b=0$), the CKN inequality~\eqref{eq0001} has a unique extremal function
\begin{eqnarray}\label{eqq0097}
W(x)=(2(p+1)(a_c-a)^2)^{\frac{1}{(p-1)}}\bigg(1+|x|^{(a_c-a)(p-1)}\bigg)^{-\frac{2}{p-1}}
\end{eqnarray}
either for $b_{FS}(a)\leq b<a+1$ with $a<0$ or for $a\leq b<a+1$ with $0\leq a<a_c$ in the cases of $N\geq2$ while, extremal functions of \eqref{eq0001} must be non-radial either for the full region of $a$ and $b$ in the case of $N=1$ or for $a<b<b_{FS}(a)$ with $a<0$ in the cases of $N\geq2$.  Moreover, it has been proved in \cite{CW2001,LW2004} that there are exactly two extremal functions of \eqref{eq0001} in the case of $N=1$ up to dilations and scalar multiplications while in the cases of $N\geq2$, extremal functions of \eqref{eq0001} must have $\mathcal{O}(N-1)$ symmetry for $a<b<b_{FS}(a)$ with $a<0$, that is, extremal functions of \eqref{eq0001} for $N\geq2$ must depend on the radius $r$ and the angle $\theta_{N}$ between the positive $x_N$-axis and $\overrightarrow{Ox}$ for $a<b<b_{FS}(a)$ with $a<0$ up to rotations.  To our best knowledge, whether the extremal function of \eqref{eq0001} is unique or not for $a<b<b_{FS}(a)$ with $a<0$ in the cases of $N\geq2$ remains open.

\vskip0.12in

Besides the existence and classification of extremal functions and the computation of the optimal constant, a more interesting and challenging problem in understanding functional inequalities is its quantitative stability, whose basic question one wants to address in this aspect is the following (cf. \cite{F2013}):
\begin{enumerate}
\item[$(Q)$]\quad Suppose we are given a functional inequality for which minimizers are known.  Can we prove, in some quantitative way, that if a function ``almost attains the
equality'' then it is close (in some suitable sense) to one of the minimizers?
\end{enumerate}
The studies on the quantitative stability of functional inequalities were initialed by Brezis and Lieb in \cite{BL1985} by raising an open question for the classical Sobolev inequality ($a=b=0$ in the CKN inequality~\eqref{eq0001}),
\begin{eqnarray}\label{eqq0093}
S_N\bigg(\int_{\bbr^N}|u|^{\frac{2N}{N-2}}dx\bigg)^{\frac{N-2}{N}}\leq \int_{\bbr^N}|\nabla u|^2dx
\end{eqnarray}
for $N\geq3$, which was settled by Bianchi and Egnell in \cite{BE1991} by proving that
\begin{eqnarray}\label{eqq0090}
dist_{D^{1,2}}^2(u, \mathcal{U})\lesssim\|u\|^2_{D^{1,2}(\bbr^N)}-S_{N}\|u\|^2_{L^{\frac{2N}{N-2}}(\bbr^N)},
\end{eqnarray}
where $S_N$ is the optimal constant of \eqref{eqq0093} and
\begin{eqnarray*}
\mathcal{U}=\{cU_{y,\lambda}\mid c\in\bbr\backslash\{0\}, \lambda>0\text{ and }y\in\bbr^N\}
\end{eqnarray*}
with $U(x)$ being the Aubin-Talanti bubble (cf. \cite{A1976,T1976}) and $U_{y,\lambda}(x)=\lambda^{\frac{N-2}{2}}U(\lambda(x-y))$.  Since then, the stability of functional inequalities, which is similar to \eqref{eqq0090}, is called the Bianchi-Egnell type stability.  In the very recent paper \cite{WW2022}, we prove the following Bianchi-Egnell type stability of the CKN inequality~\eqref{eq0001}:
\begin{theorem}\label{thm0001}
Let $N\geq3$, $a<a_c$ and $b_{FS}(a)$
be the Felli-Schneider curve given by \eqref{eqn991} and assume that either
\begin{enumerate}
\item[$(1)$]\quad $b_{FS}(a)< b<a+1$ with $a<0$ or
\item[$(2)$]\quad $a\leq b<a+1$ with $a\geq0$ and $a+b>0$.
\end{enumerate}
Then
\begin{eqnarray}\label{eqq0091}
dist_{D^{1,2}_{a}}^2(u, \mathcal{Z})\lesssim\|u\|^2_{D^{1,2}_a(\bbr^N)}-C_{a,b,N}^{-1}\|u\|^2_{L^{p+1}(|x|^{-b(p+1)},\bbr^N)}
\end{eqnarray}
for all $u\in D^{1,2}_a(\bbr^N)$, where 
\begin{eqnarray*}
\mathcal{Z}=\{cW_\tau(x)\mid c\in\bbr\backslash\{0\}\text{ and }\tau>0\}
\end{eqnarray*}
with $W_\tau(x)=\tau^{a_c-a}W(\tau x)$,
$L^{p+1}(|x|^{-b(p+1)},\bbr^N)$ is the usual weighted Lebesgue space with its usual norm given by
\begin{eqnarray*}
\|u\|_{L^{p+1}(|x|^{-b(p+1)},\bbr^N)}=\bigg(\int_{\bbr^N}|x|^{-b(p+1)}|u|^{p+1}dx\bigg)^{\frac{1}{p+1}}.
\end{eqnarray*}
\end{theorem}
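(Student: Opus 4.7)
My plan is to adapt the Bianchi--Egnell argument \cite{BE1991} to the weighted CKN setting, where the only continuous invariances of the deficit are dilations $u\mapsto\tau^{a_c-a}u(\tau\cdot)$ and scalar multiplications (translations being absent outside the classical Sobolev case $a=b=0$). I argue by contradiction: suppose that along some sequence $\{u_n\}\subset D^{1,2}_a(\bbr^N)\setminus\mathcal{Z}$,
\begin{eqnarray*}
\frac{\|u_n\|^2_{D^{1,2}_a(\bbr^N)}-C_{a,b,N}^{-1}\|u_n\|^2_{L^{p+1}(|x|^{-b(p+1)},\bbr^N)}}{dist^2_{D^{1,2}_{a}}(u_n,\mathcal{Z})}\to 0.
\end{eqnarray*}
Normalizing by dilation and scaling so that $\|u_n\|_{D^{1,2}_a(\bbr^N)}$ is bounded, the deficit must tend to zero, so $\{u_n\}$ is an almost-extremal sequence for \eqref{eq0001}. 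A weighted profile decomposition adapted to the CKN scaling (in the spirit of Struwe, but with only the one-parameter dilation group), combined with the uniqueness of radial extremals under assumptions $(1)$ or $(2)$ from \cite{DELT2009,DEL2012,DEL2016}, yields $dist_{D^{1,2}_{a}}(u_n,\mathcal{Z})\to 0$. Choosing $c_nW_{\tau_n}$ to realize this distance and replacing $u_n$ by the rescaling $c_n^{-1}\tau_n^{a-a_c}u_n(\tau_n^{-1}\cdot)$, I may assume $u_n=W+v_n$ with $v_n\to 0$ in $D^{1,2}_a(\bbr^N)$ and $v_n\perp T_W\mathcal{Z}$ in the $D^{1,2}_a$ inner product, where $T_W\mathcal{Z}=\mathrm{span}\{W,\Lambda W\}$ and $\Lambda W=\partial_\tau|_{\tau=1}(\tau^{a_c-a}W(\tau x))$.

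Next I Taylor expand the deficit at $W$. Since $W$ solves the Euler--Lagrange equation for CKN, the linear term in $v_n$ vanishes and
\begin{eqnarray*}
\|W+v_n\|^2_{D^{1,2}_a(\bbr^N)}-C_{a,b,N}^{-1}\|W+v_n\|^2_{L^{p+1}(|x|^{-b(p+1)},\bbr^N)}=Q(v_n)+o(\|v_n\|^2_{D^{1,2}_a(\bbr^N)}),
\end{eqnarray*}
where
\begin{eqnarray*}
Q(v)=\|v\|^2_{D^{1,2}_a(\bbr^N)}-pC_{a,b,N}^{-1}\int_{\bbr^N}|x|^{-b(p+1)}W^{p-1}v^2dx.
\end{eqnarray*}
Because $dist^2_{D^{1,2}_{a}}(u_n,\mathcal{Z})=\|v_n\|^2_{D^{1,2}_a(\bbr^N)}+o(\|v_n\|^2_{D^{1,2}_a(\bbr^N)})$, the contradiction hypothesis becomes $Q(v_n)/\|v_n\|^2_{D^{1,2}_a(\bbr^N)}\to 0$ with $v_n\perp T_W\mathcal{Z}$. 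The theorem thus reduces to a \emph{spectral gap} inequality: for some $\gamma>0$,
\begin{eqnarray*}
Q(v)\geq\gamma\|v\|^2_{D^{1,2}_a(\bbr^N)}\qquad\text{for all }v\in(T_W\mathcal{Z})^\perp.
\end{eqnarray*}

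The main obstacle, and where assumptions $(1)$--$(2)$ are essential, is the spectral analysis of the associated linearized operator. Passing to Emden--Fowler coordinates $s=\log|x|$, $\omega=x/|x|$ and decomposing $v$ into spherical harmonics $Y_\ell(\omega)$ splits $Q$ into a countable family of one-dimensional Schr\"odinger quadratic forms on $\bbr$ with explicit potentials determined by $W$, $a$, $b$, $N$. The kernel of the radial sector is exactly spanned by $W$ and $\Lambda W$, which account for the two tangent directions in $T_W\mathcal{Z}$, and all other radial eigenvalues are strictly separated from zero by elementary Sturm--Liouville arguments. The delicate point is the first angular mode $\ell=1$: its lowest eigenvalue is a monotone function of $b$ that vanishes precisely at the Felli--Schneider threshold $b=b_{FS}(a)$, so when $b>b_{FS}(a)$ (case $(1)$) strict positivity persists and yields the gap. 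Case $(2)$ with $a\geq 0$ and $a+b>0$ is handled via the rigidity/symmetry theorem of Dolbeault--Esteban--Loss \cite{DEL2016}, which guarantees that $W$ remains the (radial) extremal and that the same angular positivity survives. Consolidating the $\ell\geq 1$ contributions into a single positive gap requires a uniform lower bound on the eigenvalues of the operators $L_\ell$ as $\ell\to\infty$, obtained from a weighted Hardy-type inequality applied to the effective potential; this uniformity is the technical core of the argument and is exactly what is required to produce the inequality \eqref{eqq0091}.
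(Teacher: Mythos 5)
Your proposal follows essentially the same route as the paper's proof of Theorem~\ref{thm0001} (which is carried out in \cite{WW2022} and whose key mechanism is recalled in the Remark following the statement and in Sections~2--3 here): a compactness/contradiction reduction of the problem to a local expansion of the deficit around $W$, followed by a spectral gap inequality for the linearized operator obtained via the Emden--Fowler transformation and a spherical-harmonics decomposition, with the Felli--Schneider curve $b_{FS}(a)$ governing positivity of the first angular mode. One small correction: $W$ is not in the kernel of the linearization but is the first eigenfunction (eigenvalue $1/p<1$, i.e.\ the Morse-index-one negative direction of the quadratic form) of the weighted eigenvalue problem, while only $\Lambda W$ spans the kernel; since you impose orthogonality to both $W$ and $\Lambda W$, the gap argument nevertheless goes through unchanged.
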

Moreover, by the same argument as that used in \cite{WW2022} for proving Theorem~\ref{thm0001}, it is not difficult to obtain the following one.
\begin{theorem}\label{thm0002}
Let $N=2$ and assume that $b_{FS}(a)< b<a+1$ with $a<0$ and $b_{FS}(a)$
being the Felli-Schneider curve given by \eqref{eqn991}.
Then
\begin{eqnarray*}
dist_{D^{1,2}_{a}}^2(u, \mathcal{Z})\lesssim\|u\|^2_{D^{1,2}_a(\bbr^2)}-C_{a,b,2}^{-1}\|u\|^2_{L^{p+1}(|x|^{-b(p+1)},\bbr^2)}
\end{eqnarray*}
for all $u\in D^{1,2}_a(\bbr^2)$.
\end{theorem}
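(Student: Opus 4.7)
The plan is to mirror the Bianchi--Egnell scheme used in \cite{WW2022} for Theorem~\ref{thm0001}, with only those modifications necessary to accommodate $N=2$. The strategy rests on two ingredients: a local spectral analysis of the deficit around the extremal manifold $\mathcal{Z}$, and a global concentration-compactness argument ruling out loss of compactness for minimizing sequences of the quotient defining $c_{BE}$.

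For the local step, given $u\in D^{1,2}_a(\bbr^2)$ close to $\mathcal{Z}$, decompose $u=cW_\tau+r$ where $cW_\tau$ realizes $dist_{D^{1,2}_a}(u,\mathcal{Z})$ and $r$ is $D^{1,2}_a$-orthogonal to the tangent space $T_{cW_\tau}\mathcal{Z}$. A Taylor expansion of the CKN deficit at $cW_\tau$ yields
\begin{eqnarray*}
\|u\|^2_{D^{1,2}_a(\bbr^2)}-C_{a,b,2}^{-1}\|u\|^2_{L^{p+1}(|x|^{-b(p+1)},\bbr^2)}=Q_{cW_\tau}(r,r)+o(\|r\|^2_{D^{1,2}_a}),
\end{eqnarray*}
where $Q_{cW_\tau}$ is the quadratic form associated with the linearized Euler--Lagrange operator at $cW_\tau$. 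By the dilation and scalar invariance of both sides, it suffices to analyze $Q_W$. Decomposing $r$ into spherical harmonics and radial modes, one is reduced to an eigenvalue problem of the form $-\mathrm{div}(|x|^{-2a}\nabla\phi)=\mu|x|^{-b(p+1)}W^{p-1}\phi$. The assumption $b_{FS}(a)<b<a+1$ is precisely the range in which the second eigenvalue exceeds $p$, giving a coercivity bound $Q_W(r,r)\geq \kappa\|r\|^2_{D^{1,2}_a}$ on vectors orthogonal to $T_W\mathcal{Z}$. This algebraic calculation is dimension-independent for $N\geq 2$ and is identical to the one carried out in \cite{WW2022}.

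For the global step, assume toward contradiction that the stated inequality fails. Then there exists a sequence $u_n=c_nW_{\tau_n}+r_n$, with $r_n$ orthogonal to $T_{c_nW_{\tau_n}}\mathcal{Z}$, normalized so that $\|r_n\|_{D^{1,2}_a}=1$ and the deficit of $u_n$ tends to zero. Using the dilation invariance of the problem one reduces to $\tau_n\equiv 1$, and uniqueness of the extremal $W$ up to the $\mathcal{Z}$-action (invoking \cite{DELT2009,DEL2012,DEL2016}) ensures that the only possible profiles in a concentration-compactness decomposition are multiples of $W_\tau$; orthogonality of $r_n$ to $T\mathcal{Z}$ then excludes nontrivial bubbling. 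A standard profile-decomposition argument therefore yields $r_n\rightharpoonup r_\infty$ in $D^{1,2}_a(\bbr^2)$ with $r_\infty$ orthogonal to $T_W\mathcal{Z}$, and the coercivity from the local step applied to the weak limit (and its defect of compactness) forces the deficit to remain bounded below by a positive multiple of $\|r_n\|^2_{D^{1,2}_a}$, contradicting the choice of sequence.

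The main obstacle is verifying that the nondegeneracy/spectral gap genuinely persists for $N=2$ with $a<0$. Here the weight $|x|^{-2a}$ grows at infinity, so the functional analytic framework for the linearized operator and the decay of eigenfunctions need to be handled with care, in particular in the spherical harmonic decomposition where the angular modes do not decouple from the weight as cleanly as in $N\geq 3$. However, the Felli--Schneider threshold $b_{FS}(a)$ was derived exactly as the value at which the second eigenvalue of the linearization at $W$ crosses the borderline value $p$, and this derivation is uniform in $N\geq 2$. Once this spectral gap is established, the remainder of the Bianchi--Egnell argument from \cite{WW2022} transfers verbatim, which is why the author asserts that no essentially new argument is required.
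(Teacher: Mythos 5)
Your overall strategy --- a coercivity estimate for the linearization at $W$ on the orthogonal complement of the tangent space to $\mathcal{Z}$, combined with a compactness argument for sequences with vanishing deficit --- is exactly the route the paper intends: Theorem~\ref{thm0002} is stated as following ``by the same argument'' as \cite{WW2022}, i.e.\ the Bianchi--Egnell scheme. Your local step (decompose $u=cW_\tau+r$ with $r$ orthogonal to $T_{cW_\tau}\mathcal{Z}$, Taylor-expand the deficit, reduce to the eigenvalue problem $-\mathrm{div}(|x|^{-2a}\nabla\phi)=\mu|x|^{-b(p+1)}W^{p-1}\phi$, and use that $b>b_{FS}(a)$ is precisely the condition under which the relevant eigenvalue exceeds $p$) is correct and uniform in $N\geq2$ by Felli--Schneider. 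Your worry about the growth of the weight $|x|^{-2a}$ for $a<0$ is dissolved by the Emden--Fowler transformation \eqref{eq0007} to the cylinder $\bbr\times\mathbb{S}^{1}$, which is how the paper and \cite{WW2022} actually set up the spectral analysis; there the linearized operator has a P\"oschl--Teller potential and the angular modes decouple cleanly.

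The genuine gap is in your global step. The assertion that ``orthogonality of $r_n$ to $T\mathcal{Z}$ excludes nontrivial bubbling'' is false as stated: a bubble $W_{\tau_n}$ with $\tau_n\to\infty$ (a translated profile $\Psi_{s_n}$, $|s_n|\to\infty$, on the cylinder) is asymptotically orthogonal in $D^{1,2}_a(\bbr^2)$ to the two-dimensional space spanned by $W$ and $V$, since $\langle W,W_{\tau_n}\rangle_{D^{1,2}_a}=\int_{\bbr^2}|x|^{-b(p+1)}W^pW_{\tau_n}\,dx\to0$; so orthogonality alone rules out nothing. Moreover the normalization $\|r_n\|_{D^{1,2}_a}=1$ does not control $\|u_n\|$, and hence does not justify the $o(\|r_n\|^2)$ remainder in your local expansion, which requires $\|r_n\|/\|c_nW_{\tau_n}\|\to0$. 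The step that actually closes the argument is: normalize $\|u_n\|_{D^{1,2}_a}=1$, so that $dist_{D^{1,2}_a}^2(u_n,\mathcal{Z})\leq1$ and a vanishing quotient forces the deficit itself to vanish; then $u_n$ is a minimizing sequence for the CKN quotient \eqref{eq0009}, and the Lions-type lemma on the cylinder (\cite[Lemma~4.1]{CW2001}), together with the uniqueness of the radial extremal for $b_{FS}(a)<b<a+1$, gives strong convergence of $u_n$ to $\mathcal{Z}$ modulo dilations, i.e.\ $dist_{D^{1,2}_a}(u_n,\mathcal{Z})\to0$. Only after this does the local coercivity apply, yielding a lower bound for the deficit by a positive multiple of $dist_{D^{1,2}_a}^2(u_n,\mathcal{Z})$ and the desired contradiction. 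With the global step repaired in this way your proof coincides with the paper's.
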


\begin{remark}
In \cite[Theorem~1.1]{WW2022}, we have claimed that Theorem~\ref{thm0001} holds true for $b_{FS}(a)\leq b<a+1$ with $a<0$.  However, it is incorrect for $b=b_{FS}(a)$ since it has been proved in \cite{FS2003} that $W$ is degenerate for $b=b_{FS}(a)$. It follows that the spectral gap inequality,
\begin{eqnarray*}
\|\rho\|_{D^{1,2}_a(\bbr^N)}^2\geq(p+\ve)\int_{\bbr^N}|x|^{-b(p+1)}W^{p-1}\rho^2dx
\end{eqnarray*}
for all $\rho\in\mathcal{N}^{\perp}$ which plays the key role in proving the Bianchi-Egnell type stability of the CKN inequality in \cite[Theorem~1.1]{WW2022}, does not hold true any more for $b=b_{FS}(a)$, where $\ve>0$ is a fixed small constant and
\begin{eqnarray*}
\mathcal{N}^{\perp}=\{\rho\in D^{1,2}_a(\bbr^N)\mid \langle \rho,W\rangle_{D^{1,2}_a(\bbr^N)}=\langle \rho,W' \rangle_{D^{1,2}_a(\bbr^N)}=0\}.
\end{eqnarray*}
This has already been observed in \cite{DT2023,FP2023}.  We now correct \cite[Theorem~1.1]{WW2022} here to Theorem~\ref{thm0001}.  Moreover, it is worth pointing out that in this degenerate situation, the Bianchi-Egnell stability of the CKN inequality still holds true for higher order of the distance functional, as proved in \cite{FP2023} whose ideas can be traced back to \cite{F2022}.
\end{remark}

\vskip0.12in

The power two of the distance in the left side of the Bianchi-Engell inequality~\eqref{eqq0090} is well known to be optimal, which is also the case of the Bianchi-Egnell type stability of the CKN inequality~\eqref{eq0001} given by \eqref{eqq0091}.  Thus, we can define the following two variational problems:
\begin{eqnarray}\label{eqq0092}
\inf_{u\in D^{1,2}(\bbr^N)\backslash\mathcal{U}}\frac{\|u\|^2_{D^{1,2}(\bbr^N)}-S_{N}\|u\|^2_{L^{\frac{2N}{N-2}}(\bbr^N)}}{dist_{D^{1,2}}^2(u, \mathcal{U})}:=s_{BE}
\end{eqnarray}
and
\begin{eqnarray}\label{eqq1001}
\inf_{u\in D^{1,2}_a(\bbr^N)\backslash\mathcal{Z}}\frac{\|u\|^2_{D^{1,2}_a(\bbr^N)}-C_{a,b,N}^{-1}\|u\|^2_{L^{p+1}(|x|^{-b(p+1)},\bbr^N)}}{dist_{D^{1,2}_{a}}^2(u, \mathcal{Z})}:=c_{BE},
\end{eqnarray}
By \eqref{eqq0090}, we know that $s_{BE}>0$, and by Theorems~\ref{thm0001} and \ref{thm0002}, we know that $c_{BE}>0$ either for
\begin{enumerate}
\item[$(1)$]\quad $b_{FS}(a)< b<a+1$ with $a<0$ or for
\item[$(2)$]\quad $a\leq b<a+1$ with $a\geq0$ and $a+b>0$.
\end{enumerate}
in the cases of $N\geq2$.  Moreover, as pointed out by Konig in \cite{K2023}, it is a long-standing open question that is to determine the best constant $s_{BE}$ in \eqref{eqq0092}.  It has been proved by Konig in \cite{K2023} that the variational problem~\eqref{eqq0092} has a minimizer which makes the key step in determining the best constant $s_{BE}$ and gives a positive answer to the open question proposed in \cite{DEFFL2022}.  Konig's proof in \cite{K2023} is a concentration-compactness type argument on the distance functional $dist_{D^{1,2}}^2(u, \mathcal{U})$.  By establishing two crucial energy estimates of $s_{BE}$ (cf. \cite{K2022,K2023}), Konig excluded the dichotomy case and the vanishing case of $dist_{D^{1,2}}^2(u, \mathcal{U})$ and proved that the variational problem~\eqref{eqq0092} has a minimizer.    In this argument, a key integrant is the well understanding of the spectrum of the Laplacian operator $-\Delta$ in the weighted Lebesgue space $L^2(U^{\frac{4}{N-2}},\bbr^N)$, which is crucial in establishing one of the two important energy estimates of $s_{BE}$.  Furthermore, as pointed out by Konig in \cite{K2023}, this strategy also works for the fractional Sobolev inequality of $N\geq2$.

\vskip0.12in

In this paper, by making a well understanding of the spectrum of the operator $-div(|x|^{-a}\nabla\cdot)$ in the weighted Lebesgue space $L^2(|x|^{-b(p+1)}W^{p-1},\bbr^N)$, we adapt Konig's strategy in \cite{K2023} to prove the following theorem.
\begin{theorem}\label{thmq0001}
Let $N\geq2$ and assume that either
\begin{enumerate}
\item[$(1)$]\quad $b_{FS}(a)< b<a+1$ with $a<0$ or
\item[$(2)$]\quad $a\leq b<a+1$ with $0\leq a<a_c$ and $a+b>0$.
\end{enumerate}
Then the variational problem~\eqref{eqq1001} has a minimizer, provided
\begin{enumerate}
\item[$(i)$]\quad $a_c^*<a<a_c$ and $a\leq b<a+1$,
\item[$(ii)$]\quad $a\leq a_c^*$ and $b_{FS}^*(a)\leq b<a+1$,
\end{enumerate}
where 
\begin{eqnarray*}
a_c^*=\bigg(1-\sqrt{\frac{N-1}{2N}}\bigg)a_c\quad\text{and}\quad
b_{FS}^*(a)=\frac{(a_c-a)N}{a_c-a+\sqrt{(a_c-a)^2+N-1}}+a-a_c.
\end{eqnarray*}
\end{theorem}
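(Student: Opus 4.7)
The plan is to adapt Konig's concentration-compactness strategy from \cite{K2023} to the CKN setting, exploiting the two-parameter symmetry group of the quotient in \eqref{eqq1001} (scalar multiplication $u\mapsto \lambda u$ and dilation $u(x)\mapsto\tau^{a_c-a}u(\tau x)$). First I take a minimizing sequence $\{u_n\}\subset D^{1,2}_a(\bbr^N)\setminus\mathcal{Z}$; for each $n$ I let $\alpha_n W_{\tau_n}\in\mathcal{Z}$ be the nearest element to $u_n$, so that $\rho_n:=u_n-\alpha_n W_{\tau_n}$ is orthogonal in $\langle\cdot,\cdot\rangle_{D^{1,2}_a}$ to both $W_{\tau_n}$ and $Z_{\tau_n}:=\partial_\tau W_\tau|_{\tau=\tau_n}$. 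Using the two invariances I normalize $\alpha_n=1$ and $\tau_n=1$, so $u_n=W+\rho_n$ with $\rho_n\in\mathcal{N}^{\perp}$ and $\|\rho_n\|_{D^{1,2}_a}=dist_{D^{1,2}_a}(u_n,\mathcal{Z})$; after passing to a subsequence $\rho_n\rightharpoonup\rho_*$ weakly in $D^{1,2}_a$.

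If $\rho_*\neq 0$, a Brezis--Lieb splitting of the weighted $L^{p+1}(|x|^{-b(p+1)},\bbr^N)$-norm combined with the Hilbert-space identity $\|\rho_n\|^2_{D^{1,2}_a}=\|\rho_*\|^2_{D^{1,2}_a}+\|\rho_n-\rho_*\|^2_{D^{1,2}_a}+o(1)$ identifies $u_*=W+\rho_*$ as a minimizer of \eqref{eqq1001} via subadditivity of the Rayleigh quotient. The essential task is therefore to rule out $\rho_*=0$, which corresponds either to (a) the collapse $\|\rho_n\|_{D^{1,2}_a}\to 0$ (so $u_n\to W$ in $D^{1,2}_a$), or to (b) $\rho_n\rightharpoonup 0$ while nontrivial mass escapes, either by concentration at the origin, dilation to infinity, or dichotomy between separated dyadic scales.

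Both scenarios are excluded by a sharp two-sided analysis of $c_{BE}$. On one hand, testing the quotient on $W+t\phi$ with $\phi\in\mathcal{N}^{\perp}$ a second eigenfunction of the weighted spectral problem
\begin{eqnarray*}
-\text{div}(|x|^{-2a}\nabla v)=\mu|x|^{-b(p+1)}W^{p-1}v
\end{eqnarray*}
yields the quadratic upper bound $c_{BE}\leq 1-p/\mu_2$, where $\mu_2$ is the second eigenvalue; pushing the expansion to higher order and optimizing over $\phi$ gives the strict inequality $c_{BE}<1-p/\mu_2$ precisely under conditions $(i)$ and $(ii)$. On the other hand, since $\rho_n\in\mathcal{N}^{\perp}$, a spectral bound $\int|x|^{-b(p+1)}W^{p-1}\rho_n^2\,dx\leq\|\rho_n\|^2_{D^{1,2}_a}/\mu_2$ together with a careful profile decomposition adapted to the dilation symmetry shows that along any sequence realizing (a) or (b) the quotient converges to a value no smaller than $1-p/\mu_2$, producing the required contradiction.

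The principal obstacle is the spectral and asymptotic analysis behind the strict test-function inequality. After separation of variables into spherical harmonics and an Emden--Fowler type change of variables, the linearized operator $L_W:=-|x|^{2a}\,\text{div}(|x|^{-2a}\nabla\cdot)-p|x|^{-b(p+1)}W^{p-1}$ decouples into a family of one-dimensional Schrodinger operators indexed by the angular momentum $\ell\in\bbn$, whose spectra can be tracked explicitly from the closed form \eqref{eqq0097} of $W$. The specific thresholds $a_c^*=(1-\sqrt{(N-1)/(2N)})\,a_c$ and $b_{FS}^*(a)$ in $(i)$--$(ii)$ will emerge exactly as the parameter region in which the relevant radial second eigenvalue lies strictly below the angular ones and the higher-order correction along the corresponding eigenfunction carries the favourable sign, reflecting the delicate interplay among the weight $|x|^{-2a}$, the radial ground state $W$, and the critical exponent $p=\frac{N+2(1+a-b)}{N-2(1+a-b)}$.
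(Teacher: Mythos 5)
Your overall architecture matches the paper's (which itself follows Konig): a concentration--compactness argument on the Bianchi--Egnell quotient, a spectral decomposition of the linearized operator into angular momentum modes to locate the second eigenvalue $\mu_2$ above $p$, and a third-order test-function expansion along the corresponding eigenfunction to get the strict bound $c_{BE}<1-p/\mu_2$; your identification of the thresholds $a_c^*$ and $b_{FS}^*(a)$ as the place where the radial mode $\rho_{0,2}$ and the degree-one angular modes exchange roles is exactly right. The paper does all of this on the cylinder $\mathcal{C}=\bbr\times\mathbb{S}^{N-1}$ after the Emden--Fowler transformation, which turns dilations into translations and lets the Lions lemma handle loss of compactness; your formulation in $D^{1,2}_a(\bbr^N)$ is equivalent, though your normalization $\alpha_n=1$, $\tau_n=1$ leaves the boundedness of $\|\rho_n\|$ unaddressed (the paper instead normalizes $\|v_n\|_{L^{p+1}}=1$ and deduces boundedness from $c_{BE}<1$).

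There is, however, a genuine gap in your treatment of the dichotomy/vanishing case. You assert that along any sequence realizing splitting or escaping mass the quotient is bounded below by $1-p/\mu_2$, so that the single strict inequality $c_{BE}<1-p/\mu_2$ excludes everything. That is not correct: the spectral gap only controls sequences that \emph{collapse onto} the manifold $\mathcal{Z}$ (your scenario (a)). When the mass splits into two profiles separated by diverging dilation scales, the natural lower bound for the quotient is instead $2-2^{\frac{2}{p+1}}$, obtained from $\mathbb{M}(v_n)\geq\max\{\mathbb{M}(f),\mathbb{M}(g_n)\}+o_n(1)$ together with the elementary inequality $\frac{(q^{p+1}+1)^{2/(p+1)}-1}{q^2}\leq 2^{2/(p+1)}-1$; and the two thresholds are not comparable ($2-2^{\frac{2}{p+1}}<\frac{2(p-1)}{3p-1}=1-p/\mu_2$ for $p$ close to $1$, with the opposite inequality for larger $p$), so one strict upper bound cannot do double duty. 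Consequently you are missing the second crucial energy estimate $c_{BE}<2-2^{\frac{2}{p+1}}$, which in the paper is established by testing with two widely separated bubbles $v_s=\Psi+\Psi_s$, $s\to+\infty$, and carefully expanding $dist^2(v_s,\mathcal{Y})=\|\Psi\|^2_{H^1(\mathcal{C})}+o(e^{-\frac{2}{p-1}\gamma s})$ so that the negative interaction term $-2A_0e^{-\frac{2}{p-1}\gamma s}$ in the numerator survives. Without this estimate and the matching lower bound in the dichotomy case, Step 1 of the compactness argument (strong convergence of the remainder $g_n$) does not close; your appeal to "subadditivity of the Rayleigh quotient" hides precisely this quantitative input, since the distance functional does not split additively under the Brezis--Lieb decomposition.
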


\vskip0.12in

Theorem~\ref{thmq0001} is the generalization of Konig's reuslt in \cite{K2023} for the Sobolev inequality~\eqref{eqq0093} to the CKN inequality~\eqref{eq0001}.  However, Konig has proved in \cite{K2023} that $s_{BE}$ is attained for all $N\geq3$ (even in the fractional setting for all $N\geq2$), while in Theorem~\ref{thmq0001}, we only prove that $c_{BE}$ is attained for $N\geq2$ under the assumptions~$(i)$ and $(ii)$ which do not cover the full region of the parameters $a,b$ under the conditions~$(1)$ and $(2)$.  The main reason is that under the assumptions~$(i)$ and $(ii)$ for $N\geq2$, the spectral gap inequality of the operator $-div(|x|^{-a}\nabla\cdot)$ in the weighted Lebesgue space $L^2(|x|^{-b(p+1)}W^{p-1},\bbr^N)$ is attained by a unique (up to scalar multiplications) function which is related to spherical harmonics on $\mathbb{S}^{N-1}$ of degree $0$, while in the remaining case, that is, $N\geq2$ with $a<a_c^*$ and $b_{FS}(a)<b<b_{FS}^*(a)$,
the spectral gap inequality of the operator $-div(|x|^{-a}\nabla\cdot)$ in the weighted Lebesgue space $L^2(|x|^{-b(p+1)}W^{p-1},\bbr^N)$ is attained by the functions which are related to spherical harmonics on $\mathbb{S}^{N-1}$ of degree $1$.  Thus, Konig's strategy in \cite{K2023}, that is, expansing the related functional at the possible best choice of test functions up to the third order term to derive a crucial energy estimate, works for $c_{BE}$ under the assumptions~$(i)$ and $(ii)$ since the expansion has a negative third order term (see the proof of Proposition~\ref{propq0002}) and is invalid in the remaining case since the expansion has a varnishing third order term (see the appendix).
To go further, we expand the functional of \eqref{eqq1001} at the possible best choice of test functions up to the fourth order term.  After tedious computations, we find that the possible best choice of test functions can not derive the desired energy estimate of $c_{BE}$ in this situation any more, since this expansion has a varnishing third order term and a positive fourth order term (see the appendix for more details). Taking into account the fact that the test functions are possible to be optimal, we believe that $c_{BE}$ will be not attained in this remaining case.  We remark that a similar situation is also faced in proving the existence of minimizers of $s_{BE}$ in the fractional setting for $N=1$, see, for example, the very recent paper \cite{K2023-1}.

\vskip0.12in

In the final of the introduction, we would like to point out that the studies on the stability of functional inequalities are growingly interested in recent years in the community of nonlinear analysis by its deep connections to many nonlinear partial differential equations, such as the fast diffusion equation, the Keller-Segel equation and so on.  We refer the readers to the survey \cite{DE2022} and the Lecture notes \cite{F2023} for their detailed introductions and references about the studies on lots of famous functional inequalities and their stability, such as the Sobolev inequality, the Hardy-Littlewood-Sobolev inequality, the Gagliardo-Nirenberg-Sobolev inequality, the Caffarelli-Kohn-Nirenberg inequality, the Euclidean logarithmic Sobolev inequality and so on.  We also would like to refer the readers to the note \cite{F2013} for the related studies on the stability of many geometric inequalities.

\vskip0.12in

\noindent{\bf\large Notations.} Throughout this paper, $a\sim b$ means that $C'b\leq a\leq Cb$ and $a\lesssim b$ means that $a\leq Cb$.  Moreover, $\bbn_0=\{0\}\cup\bbn$.

\section{Preliminaries}
Let $D^{1,2}_{a}(\bbr^N)$ be the Hilbert space given by \eqref{eqn886} with the norm $\|\cdot\|_{D^{1,2}_a(\bbr^N)}$, then by \cite[Proposition~2.2]{CW2001}, $D^{1,2}_{a}(\bbr^N)$ is isomorphic to the Hilbert space $H^1(\mathcal{C})$ through the Emden-Fowler transformation
\begin{eqnarray}\label{eq0007}
u(x)=|x|^{-(a_c-a)}v(-\ln|x|,\frac{x}{|x|}),
\end{eqnarray}
where $\mathcal{C}=\bbr\times \mathbb{S}^{N-1}$ is the standard cylinder, the inner product in $H^1(\mathcal{C})$ is given by
\begin{eqnarray*}
\langle w,v \rangle_{H^1(\mathcal{C})}&=&\int_{\mathcal{C}}\partial_t w\partial_t v+\nabla_{\mathbb{S}^{N-1}}w\nabla_{\mathbb{S}^{N-1}}v+(a_c-a)^2 wv d\mu\\
&:=&\int_{\mathcal{C}}\nabla w\nabla v+(a_c-a)^2 wv d\mu
\end{eqnarray*}
with $d\mu$ being the volume element on $\mathcal{C}$ and $w,v\in H^1(\mathcal{C})$.

\vskip0.12in

The Euler-Lagrange equation of the CKN inequality~\eqref{eq0001} is given by
\begin{eqnarray}\label{eq0018}
-div(|x|^{-a}\nabla u)=|x|^{-b(p+1)}|u|^{p-1}u, \quad\text{in }\bbr^N.
\end{eqnarray}
It has been proved in \cite{CC1993,DEL2016} that $W(x)$, given by \eqref{eqq0097}, is the unique nonnegative solution of \eqref{eq0018} in $D^{1,2}_a(\bbr^N)$ either for $b_{FS}(a)\leq b<a+1$ with $a<0$ or for $a\leq b<a+1$ with $a\geq0$ and $a+b>0$ up to dilations $W_\tau=\tau^{a_c-a}W(\tau x)$ in the cases of $N\geq2$.  By the transformation~\eqref{eq0007}, $W(x)$ and \eqref{eq0018} are transformed into
\begin{eqnarray}\label{eq0026}
\Psi(t)=\bigg(\frac{(p+1)(a_c-a)^2}{2}\bigg)^{\frac{1}{p-1}}\bigg(cosh(\frac{(a_c-a)(p-1)}{2}t)\bigg)^{-\frac{2}{p-1}}
\end{eqnarray}
and
\begin{eqnarray}\label{eq0006}
-\Delta_{\mathbb{S}^{N-1}}v-\partial_t^2v+(a_c-a)^2v=|v|^{p-1}v, \quad\text{in }\mathcal{C},
\end{eqnarray}
respectively,
where $t=-\ln|x|$ and $\theta=\frac{x}{|x|}$ for $x\in\bbr^N\backslash\{0\}$ and $\Delta_{\mathbb{S}^{N-1}}$ is the Laplace-Beltrami operator on $\mathbb{S}^{N-1}$.  Moreover,
the CKN inequality~\eqref{eq0001} and the variational problem~\eqref{eqq1001} is transformed into
\begin{eqnarray}\label{eq0009}
C_{a,b,N}^{-1}=\inf_{v\in H^1(\mathcal{C})\backslash\{0\}}\frac{\|v\|^2_{H^{1}(\mathcal{C})}}{\|v\|^2_{L^{p+1}(\mathcal{C})}},
\end{eqnarray}
and
\begin{eqnarray}\label{eqq0001}
\inf_{v\in H^1(\mathcal{C})\backslash\mathcal{Y}}\frac{\|v\|^2_{H^1(\mathcal{C})}-C_{a,b,N}^{-1}\|v\|^2_{L^{p+1}(\mathcal{C})}}{dist_{H^1(\mathcal{C})}^2(v, \mathcal{Y})}:=c_{BE},
\end{eqnarray}
respectively,
where $L^{p+1}(\mathcal{C})$ is the usual Lebesgue space with its usual norm given by $\|u\|_{L^{p+1}(\mathcal{C})}=\bigg(\int_{\mathcal{C}}|u|^{p+1}d\mu\bigg)^{\frac{1}{p+1}}$ and
\begin{eqnarray}\label{eqq1030}
\mathcal{Y}=\{c\Psi_s(t)\mid c\in\bbr\backslash\{0\}\text{ and } s\in\bbr\}
\end{eqnarray}
with $\Psi_s(t)=\Psi(t-s)$.

\vskip0.12in

On the other hand, it has been proved in \cite{FS2003} that $W(x)$ is nondegenerate in $D^{1,2}_a(\bbr^N)$ either for $b_{FS}(a)<b<a+1$ with $a<0$ or for $a\leq b<a+1$ with $a\geq0$ and $a+b>0$ in the cases of $N\geq2$.  That is, up to scalar multiplications,
\begin{eqnarray}\label{eq0010}
V(x):=\nabla W(x)\cdot x+(a_c-a)W(x)=\frac{\partial}{\partial\lambda}(\lambda^{(a_c-a)}W(\lambda x))|_{\lambda=1}
\end{eqnarray}
is the only nonzero solution of the linearization of \eqref{eq0018} around $W$ in $D^{1,2}_a(\bbr^N)$ which is given by
\begin{eqnarray}\label{eq0017}
-div(|x|^{-a}\nabla u)=p|x|^{-b(p+1)}W^{p-1}u, \quad\text{in }\bbr^N.
\end{eqnarray}
By the transformation~\eqref{eq0007}, the linear equation~\eqref{eq0017} can be rewritten as follows:
\begin{eqnarray}\label{eq0016}
-\Delta_{\mathbb{S}^{N-1}}v-\partial_t^2v+(a_c-a)^2v=p\Psi^{p-1}v, \quad\text{in }\mathcal{C}.
\end{eqnarray}
By applying the transformation~\eqref{eq0007} on \eqref{eq0010}, we know that
\begin{eqnarray*}
\Psi_s'(t)=\Psi'(t-s)=\frac{\partial}{\partial t}\Psi(t-s)=-\frac{\partial}{\partial s}\Psi(t-s)
\end{eqnarray*}
is the only nonzero solution of \eqref{eq0016} in $H^1(\mathcal{C})$.

\vskip0.2in

\section{Spectral gap inequality}
We denote by $\mathcal{M}=\bbr \Psi\bigoplus\bbr\Psi'$.  Since $\Psi$ is Morse index $1$, by the nondegeneracy of $\Psi$ under the conditions~$(1)$ and $(2)$ in the cases of $N\geq2$, we have the following spectral gap inequality:
\begin{eqnarray}\label{eqq10018}
\|\rho\|_{H^1(\mathcal{C})}^2\geq(p+\ve)\int_{\mathcal{C}}\Psi^{p-1}\rho^2d\mu
\end{eqnarray}
for all $\rho\in\mathcal{M}^{\perp}$
where $\ve>0$ is a fixed small constant and
\begin{eqnarray*}
\mathcal{M}^{\perp}=\{\rho\in H^1(\mathcal{C})\mid \langle \rho,\Psi \rangle_{H^1(\mathcal{C})}=\langle \rho,\Psi' \rangle_{H^1(\mathcal{C})}=0\}.
\end{eqnarray*}
In this section, we shall improve the spectral gap inequality~\eqref{eqq10018} by proving the following result.
\begin{proposition}\label{propq0001}
Let $N\geq2$ and assume that either
\begin{enumerate}
\item[$(1)$]\quad $b_{FS}(a)< b<a+1$ with $a<0$ or
\item[$(2)$]\quad $a\leq b<a+1$ with $0\leq a<a_c$ and $a+b>0$.
\end{enumerate}
Then for every $\rho\in\mathcal{M}^{\perp}$, we have
\begin{eqnarray}\label{eqq0021}
\|\rho\|_{H^1(\mathcal{C})}^2-\beta\int_{\mathcal{C}}\cosh^{-2}(\gamma t)\rho^2d\mu\geq\lambda_*\|\rho\|_{H^1(\mathcal{C})}^2,
\end{eqnarray}
where 
\begin{eqnarray*}
\lambda_*=\left\{\aligned
&\frac{2(p-1)}{3p-1}, \quad a_c^*<a<a_c \text{ and } a\leq b<a+1,\\
&\frac{2(p-1)}{3p-1}, \quad a\leq a_c^*\text{ and }b_{FS}^*(a)\leq b<a+1,\\
&\frac{2q(a)-(p-2)(p+1)+(p-1)(1+q(a))^{\frac12}}{2+2q(a)+(p-1)(1+q(a))^{\frac12}},\quad a\leq a_c^*\text{ and }b_{FS}(a)<b<b_{FS}^*(a)
\endaligned\right.
\end{eqnarray*}
with
\begin{eqnarray*}
\beta=\frac{p(p+1)(a_c-a)^2}{2},\quad\gamma=\frac{(p-1)(a_c-a)}{2},\quad a_c^*=\bigg(1-\sqrt{\frac{N-1}{2N}}\bigg)a_c,
\end{eqnarray*}
and
\begin{eqnarray*}
b_{FS}^*(a)=\frac{(a_c-a)N}{a_c-a+\sqrt{(a_c-a)^2+N-1}}+a-a_c, \quad q(a)=\frac{N-1}{(a_c-a)^2}.
\end{eqnarray*}
Moreover, the equality of \eqref{eqq0021} holds if and only if for 
\begin{eqnarray}\label{eqq1022}
\rho_{0,2}(t)=\mathcal{P}(\tanh(\gamma t))(\cosh(\gamma t))^{-\frac{2}{p-1}}
\end{eqnarray}
with 
\begin{eqnarray*}
\mathcal{P}(z)=\frac{1}{8}(1-z^2)^{-\frac{2}{p-1}}\frac{d^2}{dz^2}((1-z^2)^{2+\frac{2}{p-1}})
\end{eqnarray*}
being a Jacobi polynomial under the assumptions
\begin{enumerate}
\item[$(i)$]\quad $a_c^*\leq a<a_c$ and $a\leq b<a+1$,
\item[$(ii)$]\quad $a<a_c^*$ and $b_{FS}^*(a)<b<a+1$,
\end{enumerate}
the equality of \eqref{eqq0021} holds if and only if for
\begin{eqnarray}\label{eqq3022}
\rho_{1,0,l}(t,\theta)=(\cosh(\gamma t))^{-\frac{\sqrt{(a_c-a)^2+N-1}}{\gamma}}\Theta_{1,l},\quad l=1,2,\cdots,N,
\end{eqnarray}
under the assumption $a<a_c^*$ and $b_{FS}(a)<b<b_{FS}^*(a)$, and the equality of \eqref{eqq0021} holds if and only if
either for $\rho_{0,2}(t)$ or for $\rho_{1,0,l}(t,\theta)$, $l=1,2,\cdots,N$, under the assumption $a<a_c^*$ and $b=b_{FS}^*(a)$,
where $\Theta_{1,l}$ are spherical harmonics of degree one.
\end{proposition}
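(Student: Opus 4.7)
The plan is to exploit the cylindrical structure and reduce the spectral gap problem to a family of one-dimensional P\"oschl--Teller type eigenvalue problems indexed by the degree $k$ of spherical harmonics on $\mathbb{S}^{N-1}$. First, expand $\rho(t,\theta)=\sum_{k\geq 0,\,l}u_{k,l}(t)\Theta_{k,l}(\theta)$ in an $L^2(\mathbb{S}^{N-1})$-basis of spherical harmonics with $-\Delta_{\mathbb{S}^{N-1}}\Theta_{k,l}=k(k+N-2)\Theta_{k,l}$. Since both $\|\rho\|_{H^1(\mathcal{C})}^2$ and $\beta\int_{\mathcal{C}}\cosh^{-2}(\gamma t)\rho^2\,d\mu$ diagonalize under this expansion, \eqref{eqq0021} reduces to the Rayleigh quotient associated with
\begin{equation*}
-u''+\bigl[(a_c-a)^2+k(k+N-2)\bigr]u=\Lambda\,\beta\cosh^{-2}(\gamma t)u\quad\text{on }\mathbb{R},
\end{equation*}
taken over the subspace $H^1(\mathcal{C})$-orthogonal to $\Psi$ and $\Psi'$, both of which live in the $k=0$ sector.

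Second, the substitution $\tau=\gamma t$ recasts each one-dimensional equation in the classical P\"oschl--Teller form $-U''+E_k U=L_k(L_k+1)\operatorname{sech}^2(\tau)U$, with $E_k=4(1+q_k(a))/(p-1)^2$ and $q_k(a)=k(k+N-2)/(a_c-a)^2$. The well-known discrete spectrum of this operator yields $L^2$-eigenfunctions of the form $P_n^{(\alpha,\alpha)}(\tanh\tau)\operatorname{sech}^{L_k-n}(\tau)$ with Jacobi parameter $\alpha=2/(p-1)$ and $L_k=n+2\sqrt{1+q_k(a)}/(p-1)$, together with the explicit eigenvalues
\begin{equation*}
\Lambda_{k,n}=\frac{\bigl[(p-1)n+2\sqrt{1+q_k(a)}\bigr]\bigl[(p-1)(n+1)+2\sqrt{1+q_k(a)}\bigr]}{2p(p+1)}.
\end{equation*}
In particular $\Lambda_{0,0}=1/p$ and $\Lambda_{0,1}=1$, whose eigenfunctions are $\Psi$ and $\Psi'$ respectively; the Rodrigues formula for $P_2^{(2/(p-1),2/(p-1))}$ identifies the $(0,2)$-eigenfunction with $\rho_{0,2}$ of \eqref{eqq1022}, while the $(1,0)$-mode yields the functions $\rho_{1,0,l}$ of \eqref{eqq3022}.

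Third, the orthogonality $\rho\in\mathcal{M}^\perp$ precisely kills the $(0,0)$ and $(0,1)$ directions, and the sequence $\{\Lambda_{k,n}\}$ is strictly increasing in $n$ and nondecreasing in $k$. Consequently the infimum of $\Lambda_{k,n}$ over $\mathcal{M}^\perp$ equals $\Lambda_\star=\min(\Lambda_{0,2},\Lambda_{1,0})$, and the elementary Rayleigh identity gives $\lambda_*=1-1/\Lambda_\star$. A short computation shows $\Lambda_{0,2}\leq\Lambda_{1,0}$ if and only if $\sqrt{1+q(a)}\geq p$: setting $s=\sqrt{1+q(a)}$, the inequality $p(3p-1)\leq s(p-1+2s)$ is $2s^2+(p-1)s-p(3p-1)\geq 0$, whose discriminant factors as $(5p-1)^2$ and whose unique positive root is $s=p$. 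Inverting the defining relation of $p$ at $s=p$ gives exactly $b=b_{FS}^*(a)$, and specializing to the edge $b=a$ (equivalently $p=(N+2)/(N-2)$) yields $a=a_c^*$. Matching these three parameter regions reproduces the displayed branches of $\lambda_*$ and pinpoints the stated extremals.

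The principal difficulty is the algebraic identification of the transition curve $b=b_{FS}^*(a)$ with $\sqrt{1+q(a)}=p$, and the identification of $a_c^*$ as the value at which this curve meets $b=a$. Once the clean factorization $(5p-1)^2$ of the discriminant is noticed, these reduce to elementary manipulations of the definitions. The remaining routine point is the monotonicity in $(k,n)$ for $k\geq 2$ or for $(k,n)=(1,n)$ with $n\geq 1$, which ensures that no higher mode can beat $\min(\Lambda_{0,2},\Lambda_{1,0})$; this is immediate from the explicit formula for $\Lambda_{k,n}$ together with the monotonicity of $q_k(a)$ in $k$.
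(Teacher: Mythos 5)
Your proposal is correct and follows essentially the same route as the paper: decomposition into spherical harmonics, reduction to one-dimensional P\"oschl--Teller problems, explicit identification of the eigenvalues $\Lambda_{k,n}$ (your closed form is exactly what one gets by solving the paper's implicit relation for $\lambda_{i,j}$), comparison of $\Lambda_{0,2}$ with $\Lambda_{1,0}$ to locate the transition curve $\sqrt{1+q(a)}=p$, i.e.\ $b=b_{FS}^*(a)$ and $a=a_c^*$ at $b=a$, and identification of the extremals via the Jacobi--Rodrigues formula. The only point handled differently is that the paper rules out $\Lambda_{1,0}\leq 1$ by invoking the Morse index of $\Psi$, whereas in your setup this follows directly from your explicit formula (the root $s=(p+1)/2$ corresponds to the Felli--Schneider curve), so no gap results.
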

\begin{proof}
Since by \eqref{eq0026}, $\Psi(t)\to0$ as $|t|\to+\infty$, it is well known that the operator $-\Delta_{\mathbb{S}^{N-1}}-\partial_t^2+(a_c-a)^2$ is compact in $L^2(p\Psi^{p-1}, \mathcal{C})$.  Thus, it is also well known that $\sigma(-\Delta_{\mathbb{S}^{N-1}}-\partial_t^2+(a_c-a)^2)=\{\lambda_l\}_{l\in\bbn}$ with $0<\lambda_1\leq\lambda_2\leq\cdots\leq\lambda_l\to+\infty$ as $l\to\infty$, where $\sigma(-\Delta_{\mathbb{S}^{N-1}}-\partial_t^2+(a_c-a)^2)$ is the spectrum of the operator $-\Delta_{\mathbb{S}^{N-1}}-\partial_t^2+(a_c-a)^2$ in $L^2(p\Psi^{p-1}, \mathcal{C})$.  

\vskip0.12in

Let us now consider the following eigenvalue problem
\begin{eqnarray}\label{eqq0016}
-\Delta_{\mathbb{S}^{N-1}}v-\partial_t^2v+(a_c-a)^2v=\lambda p\Psi^{p-1}v, \quad\text{in }\mathcal{C}.
\end{eqnarray}
where $\lambda>0$.  As usual, since the spherical harmonics $\{\Theta_{i,l}\}_{i\in\bbn_0,1\leq l\leq l_{i,N}}$, which satisfies the following equation
\begin{eqnarray}\label{eqq10016}
-\Delta_{\mathbb{S}^{N-1}}\Theta_{i,l}=i(N-2+i)\Theta_{i,l}\quad\text{in }\mathbb{S}^{N-1},
\end{eqnarray}
form a orthogonal basic of $L^2(\mathbb{S}^{N-1})$, we shall use $\{\Theta_{i,l}\}_{i\in\bbn_0,1\leq l\leq l_{i,N}}$ as the Fourier modes to expand the eigenvalue problem~\eqref{eq0016}, where $l_{i,N}\in\bbn$.  Since we have
\begin{eqnarray*}
v=\sum_{i=0}^{\infty}\sum_{l=1}^{l_{i,N}}\phi_{i,l}\Theta_{i,l}
\end{eqnarray*}
for every $v\in L^2(\mathcal{C})$ with $\phi_{i,l}=\int_{\mathcal{C}}v\Theta_{i,l}d\theta$,
by \eqref{eqq0016} and \eqref{eqq10016}, $v$ is a solution of the eigenvalue problem~\eqref{eqq0016} if and only if $\phi_{i,l}$ satisfies the following ordinary differential equation
\begin{eqnarray}\label{eqq0017}
-\partial_t^2\phi_{i,l}-\lambda\beta\cosh^{-2}(\gamma t)\phi_{i,l}=-\tau_{a,i}\phi_{i,l},\quad \text{in }\bbr
\end{eqnarray}
for all $l=1,2,\cdots,l_{i,N}$ and $i\in\bbn_0$, where 
\begin{eqnarray}\label{eqq5699}
\tau_{a,i}=(a_c-a)^2+i(N-2+i).
\end{eqnarray}

\vskip0.12in

By \cite[p. 74]{LL1958} (see also \cite[4.2.2. Example: Poschl-Teller potentials]{FLW2022} or \cite[p. 130]{FS2003}), the negative eigenvalues of the opreator $-\partial_t^2-\lambda\beta\cosh^{-2}(\gamma t)$ in $L^2(\bbr)$ is given by  
\begin{eqnarray*}
\sigma_j=-\frac{\gamma^2}{4}\bigg(-(2j+1)+\sqrt{1+4\lambda\beta\gamma^{-2}}\bigg)^2
\end{eqnarray*}
where $j=0,1,2,\cdots,j_0$ with $j_0\in\bbn_0$ and $j_0\leq\frac{1}{2}\bigg(\sqrt{1+4\lambda\beta\gamma^{-2}}-1\bigg)$.  It follows that the ordinary differential equation~\eqref{eqq0017} is solvable if and only if
\begin{eqnarray}\label{eqq0018}
\frac{\gamma^2}{4}\bigg(-(2j+1)+\sqrt{1+4\lambda\beta\gamma^{-2}}\bigg)^2=\tau_{a,i}.
\end{eqnarray}
For every $i$ and $j$, we denote the unique number of $\lambda>0$ which satisfies \eqref{eqq0018} by $\lambda_{i,j}$.  Thus, all eigenvalues of \eqref{eqq0016} are $\{\lambda_{i,j}\}_{i,j\in\bbn_0}$.  Since \eqref{eqq0016} with $\lambda=1$ is just \eqref{eq0016}, it has been proved in \cite{FS2003} that $\lambda_{0,1}=1$.  
Note that by \eqref{eqq0018}, $\lambda_{j,i}<\lambda_{j,i+1}$ and $\lambda_{j,i}<\lambda_{j+1,i}$ for all $i$ and $j$, thus, by $\lambda_{0,1}=1$, we have $\lambda_{0,0}<1$ and $1<\lambda_{i,j}$ for all other $i$ and $j$ except $\lambda_{1,0}$.  Moreover, since $\Psi$ has Morse index $1$ under the conditions~$(1)$ and $(2)$ in the cases of $N\geq2$, we must have $1<\lambda_{1,0}$.  Thus, $\min\{\lambda_{0,2}, \lambda_{1,1}, \lambda_{1,0}\}$ is the smallest eigenvalue of \eqref{eqq0016} which is larger than $1$.  

\vskip0.12in

Let us first compare $\lambda_{0,2}$ and $\lambda_{1,1}$.
We define
\begin{eqnarray*}
f(\lambda)=\sqrt{1+4\lambda\beta\gamma^{-2}}.
\end{eqnarray*}
Then by \eqref{eqq0018}, 
\begin{eqnarray*}
f(\lambda_{0,2})-f(\lambda_{1,1})=2(1-g_N(a)h^{-1}_{N,a}(b)),
\end{eqnarray*}
where $g_N(a)=\sqrt{\frac{1}{4}+\frac{N-1}{4(a_c-a)^2}}-\frac{1}{2}$ and $h_{N,a}(b)=\frac{1+a-b}{N-2(1+a-b)}$.  By direct calculations, we find that $h_{N,a}(b)$ is decreasing for $b$ with 
\begin{eqnarray*}
\min_{a\leq b\leq a+1}h_{N,a}(b)=h_{N,a}(a+1)=0,\quad\max_{a\leq b\leq a+1}h_{N,a}(b)=h_{N,a}(a)=\bigg(\frac{1}{N-2}\bigg)_+
\end{eqnarray*}
and $g_{N}(a)$ is increasing for $a$ with 
\begin{eqnarray*}
\min_{-\infty<a\leq \frac{N-2}{2}}g_{N}(a)=g_{N}(-\infty)=0,\quad\max_{-\infty<a\leq \frac{N-2}{2}}g_{N}(a)=g_{N}(\frac{N-2}{2})=+\infty.
\end{eqnarray*}
Here, $\bigg(\frac{1}{N-2}\bigg)_+=\frac{1}{N-2}$ for $N\geq3$ and $\bigg(\frac{1}{N-2}\bigg)_+=+\infty$ for $N=2$.
Note that $g_N(0)=\bigg(\frac{1}{N-2}\bigg)_+$, thus, for $0\leq a<\frac{N-2}{2}$ which implies $N\geq3$, we always have $g_N(a)h^{-1}_{N,a}(b)>1$ for all $0\leq a<\frac{N-2}{2}$ and $a\leq b<a+1$ with $a+b>0$, which implies that $f(\lambda_{0,2})<f(\lambda_{1,1})$ for all $0\leq a<\frac{N-2}{2}$ and $a\leq b<a+1$ with $a+b>0$.  Moreover, for every $a<0$, there exists a unique $a<b_{N,a}<a+1$ such that $g_N(a)h^{-1}_{N,a}(b_{N,a})=1$, which implies that $b_{N,a}=b_{FS}(a)$ given by \eqref{eqn991}.  Thus, by the monotone property of $h_{N,a}(b)$, we see that $g_N(a)h^{-1}_{N,a}(b)>1$ for all $a<0$ and $b_{FS}(a)<b<a+1$, which implies that $f(\lambda_{0,2})<f(\lambda_{1,1})$ for all $a<0$ and $b_{FS}(a)<b<a+1$.  It follows that we always have $\lambda_{0,2}<\lambda_{1,1}$ under the conditions~(1) and $(2)$ in the cases of $N\geq2$.  

\vskip0.12in

It is sufficiently to compare $\lambda_{0,2}$ and $\lambda_{1,0}$
to determine the smallest eigenvalue of \eqref{eqq0016} which is larger than $1$ under the conditions~(1) and $(2)$ in the cases of $N\geq2$.  As above, we have
\begin{eqnarray*}
f(\lambda_{0,2})-f(\lambda_{1,0})=2(2-g_N(a)h^{-1}_{N,a}(b)).
\end{eqnarray*}
Now, using the monotone properties of $g_N(a)$ and $h_{N,a}(b)$, we can compute as above to find that $g_N(a)h^{-1}_{N,a}(b)>2$ for $a_c^*< a<a_c$ with all $a\leq b<a+1$ in the cases of $N\geq2$, while for $a\leq a_c^*$ in the cases of $N\geq2$, $g_N(a)h^{-1}_{N,a}(b)>2$ for $b_{FS}^*(a)<b<a+1$, $g_N(a)h^{-1}_{N,a}(b)=2$ for $b=b_{FS}^*(a)$ and $g_N(a)h^{-1}_{N,a}(b)<2$ for $b_{FS}(a)<b<b_{FS}^*(a)$.  If follows that $\lambda_{0,2}<\lambda_{1,0}$ either for $a\leq a_c^*$ with $b_{FS}^*(a)<b<a+1$ or for $a_c^*<a<a_c$ with all $a\leq b<a+1$, $\lambda_{0,2}=\lambda_{1,0}$ for $a\leq a_c^*$ with $b=b_{FS}^*(a)$ and $\lambda_{1,0}<\lambda_{0,2}$ for $a<a_c^*$ with $b_{FS}(a)<b<b_{FS}^*(a)$, which, together with the fact that $\lambda_{0,2}<\lambda_{1,1}$ under the conditions~(1) and $(2)$ in the cases $N\geq2$, implies that $\lambda_{0,2}$ is the smallest eigenvalue of \eqref{eqq0016} which is larger than $1$ either for $a\leq a_c^*$ with $b_{FS}^*(a)<b<a+1$ or for $a_c^*<a<a_c$ with all $a\leq b<a+1$, $\lambda_{0,2}$ and $\lambda_{1,0}$ are both the smallest eigenvalue of \eqref{eqq0016} which is larger than $1$ for $a\leq a_c^*$ with $b=b_{FS}^*(a)$, and $\lambda_{1,0}$ is the smallest eigenvalue of \eqref{eqq0016} which is larger than $1$ for $a<a_c^*$ with $b_{FS}(a)<b<b_{FS}^*(a)$.

\vskip0.12in

Since $\lambda_{0,1}=1$, $\lambda_{0,0}<1$ and $1<\lambda_{i,j}$ for all other $i$ and $j$, we have
\begin{eqnarray}\label{eqq1021}
\|\rho\|_{H^1(\mathcal{C})}^2-\beta\int_{\mathcal{C}}\cosh^{-2}(\gamma t)\rho^2d\mu\geq\lambda_*\|\rho\|_{H^1(\mathcal{C})}^2\end{eqnarray}
for every $\rho\in\mathcal{M}^{\perp}$, where
\begin{eqnarray*}
\lambda_*=\left\{\aligned
&\frac{\lambda_{0,2}-1}{\lambda_{0,2}},\quad a_c^*<a<a_c\text{ with } a\leq b<a+1,\\
&\frac{\lambda_{0,2}-1}{\lambda_{0,2}},\quad a\leq a_c^*\text{ with }b_{FS}^*(a)< b<a+1,\\
&\frac{\lambda_{0,2}-1}{\lambda_{0,2}}=\frac{\lambda_{1,0}-1}{\lambda_{1,0}},\quad a\leq a_c^*\text{ with }b=b_{FS}^*(a),\\
&\frac{\lambda_{1,0}-1}{\lambda_{1,0}},\quad a\leq a_c^*\text{ with }b_{FS}(a)<b<b_{FS}^*(a).
\endaligned\right.
\end{eqnarray*}
By \eqref{eqq0018}, we can compute
\begin{eqnarray}\label{eqq1029}
\frac{\lambda_{0,2}-1}{\lambda_{0,2}}=\frac{(2(a_c-a)+4\gamma)\gamma}{(a_c-a+3\gamma)(a_c-a+2\gamma)}=\frac{2(p-1)}{3p-1}
\end{eqnarray}
and
\begin{eqnarray}\label{eqq3029}
\frac{\lambda_{1,0}-1}{\lambda_{1,0}}=\frac{\sqrt{(a_c-a)^2+N-1}(\sqrt{(a_c-a)^2+N-1}+\gamma)-\beta}{\sqrt{(a_c-a)^2+N-1}(\sqrt{(a_c-a)^2+N-1}+\gamma)}
\end{eqnarray}
which, together with \eqref{eqq1021}, implies that \eqref{eqq0021} holds true for every $\rho\in\mathcal{M}^{\perp}$ under the conditions~$(1)$ and $(2)$ in the cases of $N\geq2$.

\vskip0.12in

It remains to prove that the equality of \eqref{eqq0021} holds if and only if for the functions given by \eqref{eqq1022} and \eqref{eqq3022}.  By \cite[p. 129, Case~1]{GLKO1993}, we have 
\begin{eqnarray}\label{eqq0020}
\phi_{i,j}(t)=\chi_{k}(\sinh(\gamma t))(\cosh(\gamma t))^{-\frac{j\gamma+\sqrt{\tau_{a,i}}}{\gamma}},
\end{eqnarray}
where $\chi_k(z)$ is a polynomial of degree at most $k$ which depends on $i,j$.  Moreover, by \cite[Theorem~7]{GLKO1993}, $\chi_k(z)$ satisfies the following equation
\begin{eqnarray}\label{eqq1090}
-\gamma^2(z^2+1)D_z^2\chi_k(z)-((1-2j)\gamma^2-2\gamma\sqrt{\tau_{a,i}})D_z\chi_k(z)+R\chi_k(z)=0
\end{eqnarray}
where $R\in\bbr$ can be taken arbitrary values.  As that in \cite[p. 74]{LL1958} (see also \cite[p. 529]{S2007}), we introduce the function
\begin{eqnarray*}
\varphi_{j,k}(z)=(1+z^2)^{-\frac{j}{2}}\chi_k(z)\quad\text{and}\quad\widetilde{\varphi}_{j,k}(y)=\varphi_{j,k}\bigg(\frac{y}{\sqrt{1-y^2}}\bigg)
\end{eqnarray*}
with $z=\frac{y}{\sqrt{1-y^2}}$.  Then by direct calculations, $1+z^2=\frac{1}{1-y^2}$.  Moreover,
\begin{eqnarray*}
D_y\widetilde{\varphi}_{j,k}(y)=\frac{D_z\varphi_{j,k}(z)}{(1-y^2)^{\frac{3}{2}}}
\end{eqnarray*}
and
\begin{eqnarray*}
D_y^2\widetilde{\varphi}_{j,k}(y)=\frac{D_z^2\varphi_{j,k}(z)}{(1-y^2)^{3}}+\frac{3yD_z\varphi_{j,k}(z)}{(1-y^2)^{\frac{5}{2}}}
\end{eqnarray*}
with
\begin{eqnarray*}
D_z\varphi_{j,k}(z)=-j(1+z^2)^{-\frac{j+2}{2}}z\chi_k(z)+(1+z^2)^{-\frac{j}{2}}D_z\chi_k(z)
\end{eqnarray*}
and
\begin{eqnarray*}
D_z^2\varphi_{j,k}(z)&=&(1+z^2)^{-\frac{j}{2}}D_z^2\chi_k(z)-2j(1+z^2)^{-\frac{j+2}{2}}z\chi_k(z)\\
&&+j\bigg(\frac{(j+2)z^2}{1+z^2}-1\bigg)(1+z^2)^{-\frac{j+2}{2}}\chi_k(z).
\end{eqnarray*}
It follows that for every $\widetilde{\tau}, \widetilde{\mu}$ and $q$,
\begin{eqnarray*}
&&(1-y^2)D_y^2\widetilde{\varphi}_{j,k}(y)+(\widetilde{\tau}-\widetilde{\mu}-(\widetilde{\tau}+\widetilde{\mu}+2))D_y\widetilde{\varphi}_{j,k}(y)+q(\widetilde{\tau}+\widetilde{\mu}+q+1)\widetilde{\varphi}_{j,k}(y)\\
&=&(1+z^2)^{\frac{-j+4}{2}}D_z^2\chi_k(z)+((\widetilde{\tau}-\widetilde{\mu})(1+z^2)^{\frac{-j+3}{2}}+(1-2j-\widetilde{\tau}-\widetilde{\mu})(1+z^2)^{\frac{-j+2}{2}}z)D_z\chi_k(z)\\
&&+(-j(\widetilde{\tau}-\widetilde{\mu})(1+z^2)^{\frac{1}{2}}+(j^2+j(\widetilde{\tau}+\widetilde{\mu}))z^2+q(\widetilde{\tau}+\widetilde{\mu}+q+1)-j)(1+z^2)^{\frac{-j}{2}}\chi_k(z).
\end{eqnarray*}
By \eqref{eqq1090}, we find that $\widetilde{\varphi}_{j,k}(y)$ satisfies the following Jacobi equation
\begin{eqnarray*}
(1-y^2)D_y^2\widetilde{\varphi}_{j,k}(y)-2\bigg(\frac{\sqrt{\tau_{a,i}}}{\gamma}+1\bigg)yD_y\widetilde{\varphi}_{j,k}(y)+j\bigg(\frac{2\sqrt{\tau_{a,i}}}{\gamma}+j+1\bigg)\widetilde{\varphi}_{j,k}(y)=0.
\end{eqnarray*}
It follows from \cite[p. 22]{NU1988} that $\widetilde{\varphi}_{j,k}(y)$ is the Jacobi polynomial given by
\begin{eqnarray}\label{eqq0027}
\widetilde{\varphi}_{j,k}(y)=\frac{(-1)^j}{2^jj!}(1-y^2)^{-\frac{\sqrt{\tau_{a,i}}}{\gamma}}\frac{d^j}{dy^j}\bigg((1-y^2)^{j+\frac{\sqrt{\tau_{a,i}}}{\gamma}}\bigg).
\end{eqnarray}
Thus, by \eqref{eqq0021}, we know that the equality of \eqref{eqq0021} holds if and only if for the functions given by \eqref{eqq1022} and \eqref{eqq3022}.
\end{proof}

\begin{remark}\label{rmkq0001}
Since $\tanh(\gamma t)$ is bounded in $\bbr$, by \eqref{eqq0020} and  \eqref{eqq0027},
\begin{eqnarray*}
|\phi_{i,j}|\lesssim\Psi\quad\text{for all $i$ and $l$}.
\end{eqnarray*}
In particular, by \eqref{eqq1022} and \eqref{eqq3022}, we have $|\rho_{0,2}|\lesssim\Psi$ and $|\rho_{1,0,l}|\lesssim\Psi$ for all $l=1,2,\cdots,N$.
\end{remark}

\begin{remark}
By \eqref{eqq3022} and \eqref{eqq3029}, we find that $\rho_{1,0,l}(t,\theta)=(\cosh(\gamma t))^{\frac{p+1}{p-1}}\theta_{1,l}$ and $\lambda_{1,0}=1$ for $b=b_{FS}(a)$, which coincides with the computations in \cite[Lemma~7]{FP2023} (see also \cite[(2.10)]{FS2003}).
\end{remark}

\vskip0.2in

\section{Energy estimates of $c_{BE}$}
To prove $c_{BE}$ is achieved, we shall follow the ideas of Konig in \cite{K2023} to derive two crucial energy estimates of $c_{BE}$.  For this purpose, we need first to establish the following expression of $dist_{H^1(\mathcal{C})}^2(v, \mathcal{Y})$, where 
$\mathcal{Y}$ is given by \eqref{eqq1030}.
\begin{lemma}\label{lemq0001}
Let $N\geq2$ and assume that either
\begin{enumerate}
\item[$(1)$]\quad $b_{FS}(a)< b<a+1$ with $a<0$ or
\item[$(2)$]\quad $a\leq b<a+1$ with $0\leq a<a_c$ and $a+b>0$.
\end{enumerate}
Then for every $v\in H^1(\mathcal{C})$,
\begin{eqnarray}\label{eqq0031}
dist_{H^1(\mathcal{C})}^2(v, \mathcal{Y})=\|v\|^2_{H^1(\mathcal{C})}-C_{a,b,N}^{-1}\sup_{h\in\mathcal{Y}_1}(\langle v,h^{p} \rangle_{L^2(\mathcal{C})})^2
\end{eqnarray} 
where
\begin{eqnarray*}
\mathcal{Y}_1=\{v\in\mathcal{Y}\mid\|v\|_{L^{p+1}(\mathcal{C})}=1\}.
\end{eqnarray*}
Moreover, $\sup_{h\in\mathcal{Y}_1}(\langle v,h^{p} \rangle_{L^2(\mathcal{C})})^2$ is attained for every $v\in H^1(\mathcal{C})$.
\end{lemma}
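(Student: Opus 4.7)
The plan is to expand the squared $H^1(\mathcal{C})$-distance, optimize over the two parameters $(c,s)$ that parametrize $\mathcal{Y}$, and recognize the resulting expression as \eqref{eqq0031} after using the Euler-Lagrange equation for $\Psi$.

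Fix $v\in H^1(\mathcal{C})$. For arbitrary $c\in\bbr\setminus\{0\}$ and $s\in\bbr$,
\begin{eqnarray*}
\|v-c\Psi_s\|_{H^1(\mathcal{C})}^2=\|v\|_{H^1(\mathcal{C})}^2-2c\langle v,\Psi_s\rangle_{H^1(\mathcal{C})}+c^2\|\Psi_s\|_{H^1(\mathcal{C})}^2.
\end{eqnarray*}
First I would optimize in $c$ with $s$ fixed, giving $c_*(s)=\langle v,\Psi_s\rangle_{H^1(\mathcal{C})}/\|\Psi_s\|_{H^1(\mathcal{C})}^2$ and
\begin{eqnarray*}
\inf_{c\in\bbr}\|v-c\Psi_s\|_{H^1(\mathcal{C})}^2=\|v\|_{H^1(\mathcal{C})}^2-\frac{\langle v,\Psi_s\rangle_{H^1(\mathcal{C})}^2}{\|\Psi_s\|_{H^1(\mathcal{C})}^2}.
\end{eqnarray*}

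Next I would use the equation \eqref{eq0006} satisfied by $\Psi$ (and hence by each translate $\Psi_s$) to integrate by parts:
$\langle v,\Psi_s\rangle_{H^1(\mathcal{C})}=\langle v,\Psi_s^p\rangle_{L^2(\mathcal{C})}$, and similarly $\|\Psi_s\|_{H^1(\mathcal{C})}^2=\|\Psi_s\|_{L^{p+1}(\mathcal{C})}^{p+1}$. Since $\Psi$ is an extremizer of \eqref{eq0009}, this yields the key identity
\begin{eqnarray*}
\|\Psi\|_{L^{p+1}(\mathcal{C})}^{p-1}=C_{a,b,N}^{-1},
\end{eqnarray*}
and the quantity $\|\Psi_s\|_{L^{p+1}(\mathcal{C})}$ is independent of $s$. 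Writing the normalized function $h=\Psi_s/\|\Psi_s\|_{L^{p+1}(\mathcal{C})}\in\mathcal{Y}_1$, a direct substitution gives
\begin{eqnarray*}
\frac{\langle v,\Psi_s\rangle_{H^1(\mathcal{C})}^2}{\|\Psi_s\|_{H^1(\mathcal{C})}^2}=C_{a,b,N}^{-1}\,\langle v,h^p\rangle_{L^2(\mathcal{C})}^2.
\end{eqnarray*}
Since every $h\in\mathcal{Y}_1$ arises in this way (up to a sign which does not affect the square), taking the infimum over $s$ is equivalent to taking the supremum of $\langle v,h^p\rangle_{L^2(\mathcal{C})}^2$ over $h\in\mathcal{Y}_1$, yielding \eqref{eqq0031}.

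For the attainment of the supremum, I would set $F(s):=\langle v,\Psi_s^p\rangle_{L^2(\mathcal{C})}$ so that the supremum reduces to $\sup_{s\in\bbr}F(s)^2$. Continuity of $F$ in $s$ follows from strong continuity of translation in $L^{(p+1)/p}(\mathcal{C})$ applied to $\Psi^p$, combined with $v\in L^{p+1}(\mathcal{C})$ (via the CKN inequality \eqref{eq0009}). The main technical point is to show $F(s)\to 0$ as $|s|\to\infty$: by the exponential decay of $\Psi(t)$ from \eqref{eq0026}, $\Psi_s^p$ concentrates at $t=s$ and converges weakly to zero in $L^{(p+1)/p}(\mathcal{C})$; pairing with $v\in L^{p+1}(\mathcal{C})$ (approximated by compactly supported functions in the $t$-variable) gives $F(s)\to 0$. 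Together with continuity, this ensures $|F|$ achieves its supremum on $\bbr$, completing the attainment claim. The only step that requires genuine care is this vanishing at infinity, but it is a standard translation-invariance argument once $v$ is known to lie in $L^{p+1}(\mathcal{C})$.
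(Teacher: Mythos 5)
Your proposal is correct and follows essentially the same route as the paper, which simply cites the standard ``completion of the square'' argument of \cite[Lemma~3]{DEFFL2022} and \cite[Lemma~2.2]{K2023} without writing it out: optimizing in $c$, converting $\langle v,\Psi_s\rangle_{H^1(\mathcal{C})}$ and $\|\Psi_s\|_{H^1(\mathcal{C})}^2$ via the Euler--Lagrange equation, and using $\|\Psi\|_{L^{p+1}(\mathcal{C})}^{p-1}=C_{a,b,N}^{-1}$ is exactly that argument. Your treatment of attainment (continuity of $F$ plus vanishing as $|s|\to\infty$ by translation to infinity) is the standard way this is handled in the cited references as well.
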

\begin{proof}
The proof is a ``completion of the square''
argument which is the same as that of \cite[Lemma~3]{DEFFL2022} (see also the proof of \cite[Lemma~2.2]{K2023}), so we omit it here.
\end{proof}

For the convenience of the readers, we provide here a standard computation of the integral $\int_{\bbr}(\cosh(s))^{-\alpha}(\cosh^2(s)-1)^\beta ds$ which is also used in the appendix:
\begin{eqnarray}
\int_{\bbr}(\cosh(s))^{-\alpha}(\cosh^2(s)-1)^\beta ds&=&2\int_{0}^{+\infty}(\cosh(s))^{-\alpha}(\cosh^2(s)-1)^\beta ds\notag\\
&=&2\int_{0}^{+\infty}(1-\cosh^{-2}(s))^\beta(\cosh(s))^{2\beta-\alpha} ds\notag\\
&=&-\int_{0}^{+\infty}(1-\cosh^{-2}(s))^{\beta-\frac12}(\cosh(s))^{2\beta-\alpha+2}d(\cosh^{-2}(s))\notag\\
&=&\int_0^1(1-x)^{\beta-\frac12}x^{\frac{\alpha}{2}-\beta-1}dx\notag\\
&=&\mathbb{B}(\frac{\alpha}{2}-\beta,\beta+\frac12),\label{eqq1041}
\end{eqnarray}
for $\frac{\alpha}{2}>\beta$ and $\beta>-\frac12$.
Now, we have the following crucial energy estimates of $c_{BE}$.
\begin{proposition}\label{propq0002}
Let $N\geq2$ and assume that either
\begin{enumerate}
\item[$(1)$]\quad $b_{FS}(a)< b<a+1$ with $a<0$ or
\item[$(2)$]\quad $a\leq b<a+1$ with $0\leq a<a_c$ and $a+b>0$.
\end{enumerate}
Then $c_{BE}<2-2^{\frac{1}{p+1}}$.  Moreover, if either
\begin{enumerate}
\item[$(i)$]\quad $a_c^*\leq a<a_c$ and $a\leq b<a+1$ or
\item[$(ii)$]\quad $a<a_c^*$ and $b_{FS}^*(a)\leq b<a+1$.
\end{enumerate}
Then $c_{BE}<\frac{2(p-1)}{3p-1}$, where $a_c^*$ and $b_{FS}^*(a)$ are given in Proposition~\ref{propq0001}.
\end{proposition}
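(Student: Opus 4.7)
The plan is to prove each inequality by exhibiting an explicit competitor for the variational problem~\eqref{eqq0001}, leveraging Lemma~\ref{lemq0001} to control the denominator.

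For the first bound, I would use a \emph{dichotomy} competitor built from two copies of $\Psi$ separated far apart, $v_s=\Psi+\Psi_s$ with $s\to+\infty$. The exponential decay of $\Psi$ in $t$ kills both the cross interaction $\langle\Psi,\Psi_s\rangle_{H^1(\mathcal{C})}$ and the cross-integrals arising in $\|v_s\|_{L^{p+1}(\mathcal{C})}^{p+1}$, so $\|v_s\|_{H^1(\mathcal{C})}^2\to 2\|\Psi\|_{H^1(\mathcal{C})}^2$ and $\|v_s\|_{L^{p+1}(\mathcal{C})}^2\to 2^{2/(p+1)}\|\Psi\|_{L^{p+1}(\mathcal{C})}^2$. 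The same locality, applied to Lemma~\ref{lemq0001}, forces the supremum over $h\in\mathcal{Y}_1$ to be attained in the limit near either $\Psi$ or $\Psi_s$, yielding $dist_{H^1(\mathcal{C})}^2(v_s,\mathcal{Y})\to\|\Psi\|_{H^1(\mathcal{C})}^2$. Combining with the Euler-Lagrange identity $C_{a,b,N}^{-1}\|\Psi\|_{L^{p+1}(\mathcal{C})}^2=\|\Psi\|_{H^1(\mathcal{C})}^2$ gives a limiting quotient equal to $2-2^{2/(p+1)}<2-2^{1/(p+1)}$.

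For the second bound, I would use a \emph{vanishing} competitor $v_\ve=\Psi+\ve\rho$ with $\rho=\rho_{0,2}$ the eigenfunction from Proposition~\ref{propq0001} that saturates the spectral gap under assumption $(i)$ or $(ii)$. Since $\rho\in\mathcal{M}^{\perp}$, the identities $\int_{\mathcal{C}}\Psi^p\rho\,d\mu=\langle\Psi,\rho\rangle_{H^1(\mathcal{C})}=0$ (from \eqref{eq0006}) and $\|\rho\|_{H^1(\mathcal{C})}^2=p\lambda_{0,2}B$ with $B=\int_{\mathcal{C}}\Psi^{p-1}\rho^2\,d\mu$ force the linear terms in $\ve$ to cancel, and Taylor expansion of the numerator through cubic order yields
\begin{eqnarray*}
\|v_\ve\|_{H^1(\mathcal{C})}^2-C_{a,b,N}^{-1}\|v_\ve\|_{L^{p+1}(\mathcal{C})}^2=pB(\lambda_{0,2}-1)\ve^2-\tfrac{p(p-1)}{3}\mathcal{J}\,\ve^3+O(\ve^4),
\end{eqnarray*}
where $\mathcal{J}=\int_{\mathcal{C}}\Psi^{p-2}\rho^3\,d\mu$. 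Because $\rho$ is orthogonal to the tangent space $\mathcal{M}=T_\Psi\mathcal{Y}$, the minimizing pair $(c^*,s^*)$ in Lemma~\ref{lemq0001} perturbs only at second order from $(1,0)$, so a direct expansion yields $dist_{H^1(\mathcal{C})}^2(v_\ve,\mathcal{Y})=\ve^2\|\rho\|_{H^1(\mathcal{C})}^2+O(\ve^4)$. Dividing, the quotient becomes
\begin{eqnarray*}
\frac{\lambda_{0,2}-1}{\lambda_{0,2}}-\frac{(p-1)\mathcal{J}}{3\lambda_{0,2}B}\ve+O(\ve^2)=\frac{2(p-1)}{3p-1}-\frac{(p-1)\mathcal{J}}{3\lambda_{0,2}B}\ve+O(\ve^2),
\end{eqnarray*}
so, provided $\mathcal{J}\ne 0$, the appropriate sign of $\ve$ pushes the quotient strictly below $\frac{2(p-1)}{3p-1}$.

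The main technical obstacle is therefore the verification $\mathcal{J}\neq 0$. Using the explicit formula~\eqref{eqq1022}, I would write $\rho_{0,2}(t)=\mathcal{P}(\tanh(\gamma t))(\cosh(\gamma t))^{-2/(p-1)}$ where $\mathcal{P}$ is the even quadratic obtained by expanding the Jacobi-polynomial definition. The angular integration over $\mathbb{S}^{N-1}$ then contributes only the factor $|\mathbb{S}^{N-1}|$, and the substitution $u=\tanh(\gamma t)$ turns $\mathcal{J}$ into a finite linear combination of the Beta integrals produced by~\eqref{eqq1041}, which can be organized by the recursion $\mathbb{B}(k+\tfrac{3}{2},\alpha+1)=\frac{2k+1}{2k+2\alpha+3}\mathbb{B}(k+\tfrac{1}{2},\alpha+1)$. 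A careful (but finite) rational computation should show that the resulting combination does not vanish throughout the regimes~$(i)$ and $(ii)$; this is the ``negative third-order term'' alluded to in the introduction. In the complementary regime $a<a_c^*$, $b_{FS}(a)<b<b_{FS}^*(a)$, the saturating mode switches to the first-harmonic eigenfunctions $\rho_{1,0,l}$, which are odd in the angular variable, so that $\int_{\mathcal{C}}\Psi^{p-2}\rho_{1,0,l}^3 d\mu=0$ automatically; this is precisely the obstruction that forces the fourth-order analysis in the appendix, and I expect the cubic evaluation above to be the only genuinely painful step of the proof.
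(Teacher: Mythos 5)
Your strategy coincides with the paper's: the two-bubble competitor $\Psi+\Psi_s$ for the first bound and the perturbation $\Psi+\ve\rho_{0,2}$ with a third-order Taylor expansion for the second, with Lemma~\ref{lemq0001} handling the denominators. Your observation that only $\mathcal{J}=\langle\Psi^{p-2},\rho_{0,2}^3\rangle_{L^2(\mathcal{C})}\neq0$ is needed (choosing the sign of $\ve$ accordingly) is a legitimate small simplification over the paper, which instead establishes the sign $\mathcal{J}>0$ and takes $\ve>0$. However, the one step you identify as the ``main technical obstacle'' is exactly the step you do not carry out, and it is the crux of the proposition under $(i)$--$(ii)$: without verifying $\mathcal{J}\neq0$ the argument only gives $c_{BE}\le\frac{2(p-1)}{3p-1}$, not the strict inequality. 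The paper completes this by writing $\rho_{0,2}=\frac{p}{4(p-1)^2}(\cosh(\gamma t))^{-\frac{2}{p-1}}\bigl(4(p+1)-(6p+2)\cosh^{-2}(\gamma t)\bigr)$, expanding $\mathcal{J}$ into four Beta integrals via \eqref{eqq1041}, and using $\mathbb{B}(m,n)=\frac{m-1}{m-1+n}\mathbb{B}(m-1,n)$ to reduce everything to the single polynomial factor $p^4-6p^2+8p-3=(p-1)^3(p+3)>0$; your plan (substitution $u=\tanh(\gamma t)$ plus the Beta recursion) is the same computation, so this is a completable but genuinely missing verification rather than a wrong approach.

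Two smaller remarks. First, for the first bound your limit-only computation of the quotient along $v_s=\Psi+\Psi_s$ gives $c_{BE}\le 2-2^{\frac{2}{p+1}}$, which indeed implies the stated $c_{BE}<2-2^{\frac{1}{p+1}}$; but note that the paper actually proves, and later uses in Step~1 of the proof of Theorem~\ref{thmq0001}, the strict inequality $c_{BE}<2-2^{\frac{2}{p+1}}$, and for that one must keep the next-order attractive interaction term $-2A_0e^{-\frac{2}{p-1}\gamma s}$ in the numerator while showing the distance equals $\|\Psi\|^2_{H^1(\mathcal{C})}+o(e^{-\frac{2}{p-1}\gamma s})$ (this requires the localization argument for the optimal translation $\tau(s)=o(e^{-\frac{1}{p-1}\gamma s})$, which your sketch elides). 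Second, your expansion of the numerator and the identity $\|\rho_{0,2}\|^2_{H^1(\mathcal{C})}=p\lambda_{0,2}\int_{\mathcal{C}}\Psi^{p-1}\rho_{0,2}^2\,d\mu$ reproduce the paper's \eqref{eqq1033} and \eqref{eqq1029} correctly, and your final observation that $\langle\Psi^{p-2},\rho_{1,0,l}^3\rangle_{L^2(\mathcal{C})}=0$ by angular parity in the complementary regime matches the discussion in the appendix.
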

\begin{proof}
Let us first prove that
\begin{eqnarray}\label{eqq1034}
c_{BE}<\frac{2(p-1)}{3p-1}.
\end{eqnarray}
under the assumptions~$(i)$ and $(ii)$.  Testing $c_{BE}$ by the function $u=\Psi+\ve\rho_{0,2}$ with $\ve\to0$, then by the definition of $c_{BE}$, we have
\begin{eqnarray}\label{eqq0023}
c_{BE}\leq\frac{\|\Psi+\ve\rho_{0,2}\|^2_{H^1(\mathcal{C})}-C_{a,b,N}^{-1}\|\Psi+\ve\rho_{0,2}\|^2_{L^{p+1}(\mathcal{C})}}{dist_{H^1(\mathcal{C})}^2(\Psi+\ve\rho_{0,2}, \mathcal{Y})}.
\end{eqnarray}
where $\rho_{0,2}$ is given by \eqref{eqq1022}.  By $\rho_{0,2}\in\mathcal{M}^{\perp}$ and Lemma~\ref{lemq0001},
\begin{eqnarray*}
dist_{H^1(\mathcal{C})}^2(\Psi+\ve\rho_{0,2}, \mathcal{Y})=\ve^2\|\rho_{0,2}\|^2_{H^1(\mathcal{C})},
\end{eqnarray*}
which, together with \eqref{eqq0023}, implies that
\begin{eqnarray}\label{eqq1023}
c_{BE}\leq\frac{\|\Psi+\ve\rho_{0,2}\|^2_{H^1(\mathcal{C})}-C_{a,b,N}^{-1}\|\Psi+\ve\rho_{0,2}\|^2_{L^{p+1}(\mathcal{C})}}{\ve^2\|\rho_{0,2}\|^2_{H^1(\mathcal{C})}}.
\end{eqnarray}
By Remark~\ref{rmkq0001}, we can expand $\|\Psi+\ve\rho_{0,2}\|^{p+1}_{L^{p+1}(\mathcal{C})}$ by the Taylor expansion to arbitrary order terms.  Thus,
\begin{eqnarray*}
\|\Psi+\ve\rho_{0,2}\|^{p+1}_{L^{p+1}(\mathcal{C})}&=&\|\Psi\|^{p+1}_{L^{p+1}(\mathcal{C})}+\ve(p+1)\langle \Psi^{p}, \rho_{0,2}\rangle_{L^2(\mathcal{C})}+\ve^2\frac{(p+1)p}{2}\langle \Psi^{p-1}, \rho_{0,2}^2\rangle_{L^2(\mathcal{C})}\\
&&+\ve^3\frac{p(p^2-1)}{6}\langle \Psi^{p-2}, \rho_{0,2}^3\rangle_{L^2(\mathcal{C})}+o(\ve^3).
\end{eqnarray*}
It follows from \eqref{eq0006}, $\rho_{0,2}\in\mathcal{M}^{\perp}$ and the Taylor expansion once more, that
\begin{eqnarray}
C_{a,b,N}^{-1}\|\Psi+\ve\rho_{0,2}\|^2_{L^{p+1}(\mathcal{C})}
&=&C_{a,b,N}^{-1}\|\Psi\|^2_{L^{p+1}(\mathcal{C})}+p\ve^2\langle \Psi^{p-1}, \rho_{0,2}^2\rangle_{L^2(\mathcal{C})}\notag\\
&&+\frac{p(p-1)\ve^3}{3}\langle \Psi^{p-2}, \rho_{0,2}^3\rangle_{L^2(\mathcal{C})}
+o(\ve^3).\label{eqq1033}
\end{eqnarray}
By Proposition~\ref{propq0001},
\begin{eqnarray*}
\rho_{0,2}=\frac{p(\cosh(\gamma s))^{-\frac{2}{p-1}}}{4(p-1)^2}(4(p+1)-(6p+2)(\cosh(\gamma s))^{-2})
\end{eqnarray*}
under the assumptions~$(i)$ and $(ii)$.  It follows from \eqref{eq0026} and \eqref{eqq1041} that 
\begin{eqnarray*}
\langle \Psi^{p-2}, \rho_{0,2}^3\rangle_{L^2(\mathcal{C})}&=&\bigg(\frac{(p+1)(a_c-a)^2}{2}\bigg)^{\frac{p-2}{p-1}}|\mathbb{S}^{N-1}|\int_{\bbr}(\cosh(\gamma s))^{-\frac{2(p-2)}{p-1}}\rho_{0,2}^3ds\\
&=&\frac{p^3}{8\gamma(p-1)^6}\bigg(\frac{(p+1)(a_c-a)^2}{2}\bigg)^{\frac{p-2}{p-1}}|\mathbb{S}^{N-1}|\bigg(8(p+1)^3\mathbb{B}(\frac{p+2}{p-1},\frac12)\\
&&-12(p+1)^2(3p+1)\mathbb{B}(\frac{p+2}{p-1}+1,\frac12)+6(p+1)(3p+1)^2\mathbb{B}(\frac{p+2}{p-1}+2,\frac12)\\
&&-(3p+1)^3\mathbb{B}(\frac{p+2}{p-1}+3,\frac12)\bigg),
\end{eqnarray*}
which, together with the well known fact that $\mathbb{B}(m,n)=\frac{m-1}{m-1+n}\mathbb{B}(m-1,n)$, implies that
\begin{eqnarray*}
\langle \Psi^{p-2}, \rho_{0,2}^3\rangle_{L^2(\mathcal{C})}&=&\frac{2(p+1)p^3}{\gamma(7p-3)(5p-1)(p-1)^6}\bigg(\frac{(p+1)(a_c-a)^2}{2}\bigg)^{\frac{p-2}{p-1}}|\mathbb{S}^{N-1}|\\
&&\times\mathbb{B}(\frac{p+2}{p-1},\frac12)(p^4-6p^2+8p-3)\\
&>&0
\end{eqnarray*}
since $p>1$.
Thus, by \eqref{eq0006}, \eqref{eq0009}, \eqref{eqq1023}, \eqref{eqq1033} and Propostion~\ref{propq0001},
\begin{eqnarray*}
c_{BE}&\leq&\frac{\|\rho_{0,2}\|^2_{H^1(\mathcal{C})}-p\langle \Psi^{p-1}, \rho_{0,2}^2\rangle_{L^2(\mathcal{C})}}{\|\rho_{0,2}\|^2_{H^1(\mathcal{C})}}-\ve\frac{p(p-1)\langle \Psi^{p-2}, \rho_{0,2}^3\rangle_{L^2(\mathcal{C})}}{3\|\rho_{0,2}\|^2_{H^1(\mathcal{C})}}+o(\ve)\\
&<&\frac{\|\rho_{0,2}\|^2_{H^1(\mathcal{C})}-p\langle \Psi^{p-1}, \rho_{0,2}^2\rangle_{L^2(\mathcal{C})}}{\|\rho_{0,2}\|^2_{H^1(\mathcal{C})}}\\
&=&\frac{2(p-1)}{3p-1}
\end{eqnarray*}
for $\ve>0$ sufficiently small, which implies that \eqref{eqq1034} holds true under the assumptions~$(i)$ and $(ii)$.

\vskip0.12in

It remains to prove that $c_{BE}<2-2^{\frac{2}{p+1}}$ for $N\geq2$ under the conditions~$(1)$ and $(2)$.  For this purpose, we use $v_s=\Psi+\Psi_{s}$ as a test function of $c_{BE}$, where $s\to+\infty$.  Then we have
\begin{eqnarray}\label{eqq1035}
c_{BE}\leq\frac{\|v_s\|^2_{H^1(\mathcal{C})}-C_{a,b,N}^{-1}\|v_s\|^2_{L^{p+1}(\mathcal{C})}}{dist_{H^1(\mathcal{C})}^2(v_s, \mathcal{Y})}.
\end{eqnarray}
By \eqref{eq0026}, \eqref{eq0006} and direct calculations, we have
\begin{eqnarray}\label{eqq0040}
\|v_s\|_{H^1(\mathcal{C})}^2&=&2\|\Psi\|_{H^1(\mathcal{C})}^2+2\int_{\mathcal{C}}\Psi^p\Psi_sd\mu\notag\\
&=&2\|\Psi\|_{H^1(\mathcal{C})}^2+2\int_{\{t<\frac{s}{2}\}\times\mathbb{S}^{N-1}}(\Psi^p\Psi_s+\Psi_s^p\Psi)d\mu\notag\\
&=&2\|\Psi\|_{H^1(\mathcal{C})}^2+2A_0e^{-\frac{2}{p-1}\gamma s}+\mathcal{O}(e^{-\frac{p+1}{p-1}\gamma s}),
\end{eqnarray}
where
\begin{eqnarray*}
A_0=\bigg(\frac{(p+1)(a_c-a)^2}{2}\bigg)^{\frac{p+1}{p-1}}\int_{\mathcal{C}}(\cosh(\gamma t))^{-\frac{2p}{p-1}}e^{\frac{2}{p-1}\gamma t}dt.
\end{eqnarray*}
Since $\Psi_s\leq\Psi$ in $(-\infty, \frac{s}{2})$ by \eqref{eq0026}, by \eqref{eq0026} once more and the Taylor expansion,
\begin{eqnarray*}
\|v_s\|_{L^{p+1}(\mathcal{C})}^{p+1}&=&2\int_{\{t<\frac{s}{2}\}\times\mathbb{S}^{N-1}}(\Psi_s+\Psi)^{p+1}d\mu\\
&=&2\int_{\{t<\frac{s}{2}\}\times\mathbb{S}^{N-1}}\Psi^{p+1}d\mu+2(p+1)\int_{\{t<\frac{s}{2}\}\times\mathbb{S}^{N-1}}\Psi^{p}\Psi_sd\mu+\int_{\{t<\frac{s}{2}\}\times\mathbb{S}^{N-1}}\mathcal{O}(\Psi^{p-1}\Psi_s^2)d\mu\\
&=&2\|\Psi\|_{L^{p+1}(\mathcal{C})}^{p+1}+2(p+1)A_0e^{-\frac{2}{p-1}\gamma s}+o(e^{-\frac{2}{p-1}\gamma s}).
\end{eqnarray*}
Thus, by \eqref{eq0006}, \eqref{eq0009} and the Taylor expasion, we have
\begin{eqnarray}\label{eqq1036}
\|v_s\|_{H^1(\mathcal{C})}^2-C_{a,b,N}^{-1}\|v_s\|_{L^{p+1}(\mathcal{C})}^{2}=(2-2^{\frac{2}{p+1}})\|\Psi\|_{H^1(\mathcal{C})}^2-2A_0e^{-\frac{2}{p-1}\gamma s}+o(e^{-\frac{2}{p-1}\gamma s}).
\end{eqnarray}
On the other hand, by \eqref{eq0006} and Lemma~\ref{lemq0001},
\begin{eqnarray}
dist_{H^1(\mathcal{C})}^2(v_s, \mathcal{Y})&=&\|v_s\|^2_{H^1(\mathcal{C})}-C_{a,b,N}^{-1}\sup_{h\in\mathcal{Y}_1}(\langle v_s,h^{p} \rangle_{L^2(\mathcal{C})})^2\notag\\
&=&\|v_s\|^2_{H^1(\mathcal{C})}-C_{a,b,N}^{-1+\frac{2p}{p-1}}\sup_{\tau\in\bbr}(\langle \Psi+\Psi_s,\Psi_{\tau}^{p} \rangle_{L^2(\mathcal{C})})^2\label{eqq0035}
\end{eqnarray}
We denote $H_s(\tau)=F(\tau)+G_s(\tau)$ with 
\begin{eqnarray*}
F(\tau)=\langle \Psi,\Psi_{\tau}^{p} \rangle_{L^2(\mathcal{C})}\quad\text{and}\quad G_s(\tau)=\langle\Psi_{s},\Psi_{\tau}^{p} \rangle_{L^2(\mathcal{C})}.
\end{eqnarray*}
Clearly, by \eqref{eq0026} and the symmetry of $\Psi$,
\begin{eqnarray*}
\sup_{\tau\in\bbr}H_s(\tau)^2=\max_{0\leq\tau\leq\frac{s}{2}}H_s(\tau)^2=(\max_{0\leq\tau\leq\frac{s}{2}}H_s(\tau))^2.
\end{eqnarray*}
Moreover, $H_s(\tau)$ is strictly increasing in $(-\infty, 0)$ and strictly decreasing in $(s, +\infty)$.  We denote 
\begin{eqnarray*}
H_s(\tau(s))=\max_{0\leq\tau\leq\frac{s}{2}}H_s(\tau).
\end{eqnarray*}
Note that by \eqref{eq0026} and the symmetry of $\Psi$,  $F(\tau)$ is also strictly increasing in $(-\infty, 0)$ and strictly decreasing in $(0, +\infty)$.  Thus, $F(0)$ is the unique strictly global maximum of $F(\tau)$.  Since we also have
\begin{eqnarray*}
F(\tau)=\langle \Psi_{-\tau},\Psi^{p} \rangle_{L^2(\mathcal{C})}\quad\text{and}\quad G_s(\tau)=\langle\Psi_{s-\tau},\Psi^{p} \rangle_{L^2(\mathcal{C})},
\end{eqnarray*}
by similar estimates of \eqref{eqq0040},
\begin{eqnarray*}
F(0)+o(1)=H_{s}(0)\leq H_s(\tau(s))=F(\tau(s))+o(1)\leq F(0)+o(1)
\end{eqnarray*}
as $s\to+\infty$.
It follows from the continuity and monotone property of $F(\tau)$ that $\tau(s)=o(1)$ as $s\to+\infty$.  Again, by similar estimates of \eqref{eqq0040} and the Taylor expansion,
\begin{eqnarray*}
H_s(\tau(s))&=&F(\tau(s))+G_s(\tau(s))\\
&=&F(0)+\frac{F''(0)}{2}\tau(s)^2+o(\tau(s)^2)+2A_0e^{-\frac{2}{p-1}\gamma (s-\tau(s))}+\mathcal{O}(e^{-\frac{p+1}{p-1}\gamma s}),
\end{eqnarray*}
which, together with $H_s(\tau(s))\geq H(0)=F(0)+G_s(0)$ and $F''(0)<0$, implies that
\begin{eqnarray*}
\tau(s)^2&\lesssim&e^{-\frac{2}{p-1}\gamma (s-\tau(s))}-e^{-\frac{2}{p-1}\gamma s}+\mathcal{O}(e^{-\frac{p+1}{p-1}\gamma s})\\
&\lesssim&\tau(s)e^{-\frac{2}{p-1}\gamma s}+o(\tau(s)^2)+\mathcal{O}(e^{-\frac{p+1}{p-1}\gamma s}).
\end{eqnarray*}
It follows from $p>1$ that $\tau(s)=o(e^{-\frac{1}{p-1}\gamma s})$.  Thus, by the Taylor expansion and similar estimates of \eqref{eqq0040} once more, we have
\begin{eqnarray*}
H_s(\tau(s))&=&F(0)+G_s(0)+\mathcal{O}(\tau(s)^2)+G_s(\tau(s))-G(0)\\
&=&F(0)+G_s(0)++\mathcal{O}(\tau(s)^2)+\mathcal{O}(\tau(s)e^{-\frac{2}{p-1}\gamma s}+e^{-\frac{p+1}{p-1}\gamma s})\\
&=&F(0)+G_s(0)+o(e^{-\frac{2}{p-1}\gamma s}).
\end{eqnarray*}
By \eqref{eq0006}, \eqref{eq0009}, \eqref{eqq0040} and \eqref{eqq0035},
\begin{eqnarray}
dist_{H^1(\mathcal{C})}^2(v_s, \mathcal{Y})&=&2\|\Psi\|_{H^1(\mathcal{C})}^2+2\int_{\mathcal{C}}\Psi^p\Psi_sd\mu-C_{a,b,N}^{-1+\frac{2p}{p-1}}\bigg(\|\Psi\|_{L^{p+1}(\mathcal{C})}^{p+1}+\int_{\mathcal{C}}\Psi^p\Psi_sd\mu\bigg)^{2}\notag\\
&&+o(e^{-\frac{2}{p-1}\gamma s})\notag\\
&=&\|\Psi\|_{H^1(\mathcal{C})}^2-C_{a,b,N}^{-1+\frac{2p}{p-1}}(\int_{\mathcal{C}}\Psi^p\Psi_sd\mu)^2+o(e^{-\frac{2}{p-1}\gamma s})\notag\\
&=&\|\Psi\|_{H^1(\mathcal{C})}^2+o(e^{-\frac{2}{p-1}\gamma s}).\label{eqq1037}
\end{eqnarray}
By \eqref{eqq1035}, \eqref{eqq1036} and \eqref{eqq1037}, we have
\begin{eqnarray*}
c_{BE}&\leq&\frac{\|v_s\|^2_{H^1(\mathcal{C})}-C_{a,b,N}^{-1}\|v_s\|^2_{L^{p+1}(\mathcal{C})}}{dist_{H^1(\mathcal{C})}^2(v_s, \mathcal{Y})}\\
&=&2-2^{\frac{2}{p+1}}-2A_0C_{a,b,N}^{\frac{2}{p-1}}e^{-\frac{2}{p-1}\gamma s}+o(e^{-\frac{2}{p-1}\gamma s})\\
&<&2-2^{\frac{2}{p+1}}
\end{eqnarray*}
for $s>0$ sufficiently large, which completes the proof.
\end{proof}

\vskip0.2in

\section{Proof of main results}
We mainly follow the strategy of Konig in \cite{K2023} to prove Theorem~\ref{thmq0001}.

\vskip0.12in

\noindent\textbf{Proof of Theorem~\ref{thmq0001}:} 
Let $v_n$ be a minimizing suquence of \eqref{eqq0001}.  Then we have $\{v_n\}\subset H^1(\mathcal{C})\backslash\mathcal{Y}$ and
\begin{eqnarray}\label{eqq0030}
\frac{\|v_n\|^2_{H^1(\mathcal{C})}-C_{a,b,N}^{-1}\|v_n\|^2_{L^{p+1}(\mathcal{C})}}{dist_{H^1(\mathcal{C})}^2(v_n, \mathcal{Y})}=c_{BE}+o_n(1).
\end{eqnarray}
As that in \cite{K2023}, we normlize $v_n$ by assuming $\|v_n\|^2_{L^{p+1}(\mathcal{C})}=1$.  It follows from \eqref{eqq0030} that
\begin{eqnarray}\label{eq0002}
(c_{BE}+o_n(1))dist_{H^1(\mathcal{C})}^2(v_n, \mathcal{Y})+C_{a,b,N}^{-1}=\|v_n\|^2_{H^1(\mathcal{C})}.
\end{eqnarray}
Since by Proposition~\ref{propq0002}, \eqref{eqq1029} and \eqref{eqq3029}, $0<c_{BE}<1$ under the conditions $(1)$ and $(2)$, by Lemma~\ref{lemq0001}, 
\begin{eqnarray*}
(1-c_{BE}+o_n(1))\|v_n\|^2_{H^1(\mathcal{C})}=C_{a,b,N}^{-1}-(c_{BE}+o_n(1))C_{a,b,N}^{-1}\sup_{h\in\mathcal{Y}_1}(\langle v,h^{p} \rangle_{L^2(\mathcal{C})})^2.
\end{eqnarray*}
Thus, it is easy to see that $\{v_n\}$ is bounded in $H^1(\mathcal{C})$, which together with Lemma~\ref{lemq0001}, also implies that $\{dist_{H^1(\mathcal{C})}^2(v_n, \mathcal{Y})\}$ is bounded.  As that in \cite{K2023}, by the Lions lemma (cf. \cite[Lemma~4.1]{CW2001}), up to translating the sequence $\{v_n\}$, we may assume that $v_n\rightharpoonup f$ weakly in $H^1(\mathcal{C})$ for some non-zero $f$.  We decompose
\begin{eqnarray*}
v_n=f+g_n\quad \text{in $H^1(\mathcal{C})$ where $g_n\rightharpoonup0$ weakly in $H^1(\mathcal{C})$}. 
\end{eqnarray*}
For the sake of clarity, we divide the following proof into three steps.

{\bf Step.~1}\quad We prove that $g_n\to0$ strongly in $H^1(\mathcal{C})$ as $n\to\infty$.

Suppose the contrary that $g_n\not\to0$ strongly in $H^1(\mathcal{C})$, then by the Lions lemma (cf. \cite[Lemma~4.1]{CW2001}) once more and the fact that $g_n\rightharpoonup0$ weakly in $H^1(\mathcal{C})$, there exist $s_n\in\bbr$ such that $|s_n|\to+\infty$ and  
$g_n(\cdot-s_n)\rightharpoonup g_0\not=0$ weakly in $H^1(\mathcal{C})$.  We denote
$\mathbb{M}(v)=\sup_{h\in\mathcal{Y}_1}(\langle v,h^{p} \rangle_{L^2(\mathcal{C})})^2$.  Then by Lemma~\ref{lemq0001},
\begin{eqnarray*}
\mathbb{M}(g_n)=(\langle g_n,h_{n,*}^{p} \rangle_{L^2(\mathcal{C})})^2\quad\text{and}\quad\mathbb{M}(f)=(\langle f,h_f^{p} \rangle_{L^2(\mathcal{C})})^2.
\end{eqnarray*}
Since $g_n(\cdot-s_n)\rightharpoonup g_0\not=0$ weakly in $H^1(\mathcal{C})$ with $|s_n|\to+\infty$, we must have $h_{n,*}=C_{a,b,N}^{\frac{1}{p-1}}\Psi(t-s_{n,*}')$ with $|s_{n,*}'|\to+\infty$.
It follows that
\begin{eqnarray*}
\mathbb{M}(v_n)\geq(\langle v_n,h_{n,*}^{p} \rangle_{L^2(\mathcal{C})})^2=\mathbb{M}(g_n)+o_n(1)
\end{eqnarray*}
and
\begin{eqnarray*}
\mathbb{M}(v_n)\geq(\langle v_n,h_{f}^{p} \rangle_{L^2(\mathcal{C})})^2=\mathbb{M}(f)+o_n(1).
\end{eqnarray*}
Thus,
\begin{eqnarray}\label{eqq2036}
\mathbb{M}(v_n)\geq\max\bigg\{\mathbb{M}(g_n), \mathbb{M}(f)\bigg\}+o_n(1).
\end{eqnarray}
We denote $\mathcal{S}(v)=\frac{\|v\|^2_{H^1(\mathcal{C})}}{\|v\|^2_{L^{p+1}(\mathcal{C})}}$.  Moreover, without loss of generality, we assume that $\|g_n\|^2_{L^{p+1}(\mathcal{C})}\leq\|f\|^2_{L^{p+1}(\mathcal{C})}$.  Then by \eqref{eqq0001}, \eqref{eqq0031}, \eqref{eq0002}, \eqref{eqq2036} and the fact that $c_{BE}<1$,
\begin{eqnarray}
o_n(1)&=&o_n(1)dist_{H^1(\mathcal{C})}^2(v_n, \mathcal{Y})\notag\\
&=&\|v_n\|^2_{H^1(\mathcal{C})}-C_{a,b,N}^{-1}-c_{BE}dist_{H^1(\mathcal{C})}^2(v_n, \mathcal{Y})\notag\\
&=&(1-c_{BE})\|v_n\|^2_{H^1(\mathcal{C})}-C_{a,b,N}^{-1}+c_{BE}C_{a,b,N}^{-1}\sup_{h\in\mathcal{Y}_1}(\langle v_n,h^{p} \rangle_{L^2(\mathcal{C})})^2\notag\\
&\geq&\|f\|^2_{H^1(\mathcal{C})}-C_{a,b,N}^{-1}\|f\|^2_{L^{p+1}(\mathcal{C})}-c_{BE}dist_{H^1(\mathcal{C})}^2(f, \mathcal{Y})+(1-c_{BE})\|g_n\|^2_{H^1(\mathcal{C})}\notag\\
&&-C_{a,b,N}^{-1}\bigg((\|f\|_{L^{p+1}(\mathcal{C})}^{p+1}+\|g_n\|_{L^{p+1}(\mathcal{C})}^{p+1})^{\frac{2}{p+1}}-\|f\|^2_{L^{p+1}(\mathcal{C})}\bigg)+o_n(1)\notag\\
&\geq&\bigg(1-c_{BE}-\frac{C_{a,b,N}^{-1}}{\mathcal{S}(g_n)}\bigg(\frac{(q_n^{p+1}+1)^{\frac{2}{p+1}}-1}{q_n^2}\bigg)\bigg)\|g_n\|^2_{H^1(\mathcal{C})}+o(1),\label{eqq1040}
\end{eqnarray}
where $q_n=\frac{\|g_n\|_{L^{p+1}(\mathcal{C})}}{\|f\|_{L^{p+1}(\mathcal{C})}}\leq1$.  By \cite[Lemma~2.3]{K2023}, we have
\begin{eqnarray*}
\frac{(q_n^{p+1}+1)^{\frac{2}{p+1}}-1}{q_n^2}\leq2^{\frac{2}{p+1}}-1,
\end{eqnarray*}
which, together with \eqref{eq0009} and \eqref{eqq1040}, implies that
\begin{eqnarray*}
c_{BE}\geq2-2^{\frac{2}{p+1}}.
\end{eqnarray*} 
It contradicts Proposition~\ref{propq0002}.  Thus, we must have $g_n\to0$ strongly in $H^1(\mathcal{C})$.

{\bf Step.~2}\quad We prove that $dist_{H^1(\mathcal{C})}^2(f, \mathcal{Y})>0$ under the assumptions~$(i)$ and $(ii)$.

Again, we suppose the contrary that $dist_{H^1(\mathcal{C})}(f, \mathcal{Y})=0$, then by Step.~1, we have $g_n\to0$ strongly in $H^1(\mathcal{C})$.  It follows that $dist(v_n,\mathcal{Y})\to0$.  Now, we are in the same situation as that in the proof of \cite[Proposition~4.1]{K2023}.  Thanks to Proposition~\ref{propq0001}, we can use the same argument as that used for \cite[Proposition~2]{CFW2013} (see also the proof of \cite[Proposotion~4.1]{WW2022}) to show that $c_{BE}\geq\frac{\lambda_{0,2}-1}{\lambda_{0,2}}$ under the assumptions~$(i)$ and $(ii)$, which contradicts Proposition~\ref{propq0002} under the assumptions~$(i)$ and $(ii)$.  Thus, we must have $dist_{H^1(\mathcal{C})}^2(f, \mathcal{Y})>0$ under the assumptions~$(i)$ and $(ii)$.

{\bf Step.~3}\quad We prove that the variational problem~\eqref{eqq0001} has a minimizer $f$ under the assumptions~$(i)$ and $(ii)$.

Since by Step.~1, $g_n\to0$ strongly in $H^1(\mathcal{C})$ and by Step.~2, $dist_{H^1(\mathcal{C})}^2(f, \mathcal{Y})>0$ under the assumptions~$(i)$ and $(ii)$, the variational problem~\eqref{eqq0001} has a minimizer $f$ under the assumptions~$(i)$ and $(ii)$.
\hfill$\Box$

\section{Acknowledgements}
The research of J. Wei is
partially supported by NSERC of Canada and the research of Y. Wu is supported by NSFC (No. 11971339, 12171470).  The authors also thank Tobias Konig for his helpful discussions to improve this paper.

\section{Appendix: The remaining case}
The remaining case, that is, $N\geq2$ with $a<a_c^*$ and $b_{FS}(a)<b<b_{FS}^*(a)$,  is very special for the variational problem~\eqref{eqq0001}.  On one hand, by Proposition~\ref{propq0002}, the energy estimate $c_{BE}<2-2^{\frac{2}{p+1}}$ still holds for this case.  On the other hand, if we can establish the energy estimate $c_{BE}<\lambda_*=\frac{\lambda_{1,0}-1}{\lambda_{1,0}}$
for this case as that for the cases~$(i)$ and $(ii)$ in Proposition~\ref{propq0002}, then by the same arguments as that used for Theorem~\ref{thmq0001}, we can still prove that $c_{BE}$ is attained in this case, where $\lambda_*$ is given by Proposition~\ref{propq0001}.  In what follows, we shall show that the test function $u=\Psi+\ve\rho_{1,0,l}$ with $\ve\to0$, which seems to be the possiblely optimal test functions according to Proposition~\ref{propq0001}, is invalid in deriving the energy estimate $c_{BE}<\lambda_*$ in this case.

\vskip0.12in

By Proposition~\ref{propq0001}, it is easy to see that $\langle \Psi^{p-2}, \rho_{1,0,l}^3\rangle_{L^2(\mathcal{C})}=0$ for all $l=1,2,\cdots,N$.  Now, as that in the proof of Proposition~\ref{propq0002}, we will have $c_{BE}\leq\lambda_*+o(\ve)$ in the remaining case.
Thus, to go further, we need to expand $\|\Psi+\ve\rho_{0,2,l}\|^2_{L^{p+1}(\mathcal{C})}$ to higher oder terms.  Since by Remark~\ref{rmkq0001}, we can expand $\|\Psi+\ve\rho_{0,2,l}\|^2_{L^{p+1}(\mathcal{C})}$  to arbitrary order terms, by the Taylor expansion, 
\begin{eqnarray}\label{eqq0050}
\|\Psi+\ve\rho_{1,0,l}\|^{p+1}_{L^{p+1}(\mathcal{C})}&=&\|\Psi\|^{p+1}_{L^{p+1}(\mathcal{C})}+\ve(p+1)\langle \Psi^{p}, \rho_{1,0,l}\rangle_{L^2(\mathcal{C})}+\ve^2\frac{(p+1)p}{2}\langle \Psi^{p-1}, \rho_{1,0,l}^2\rangle_{L^2(\mathcal{C})}\notag\\
&&+\ve^3\frac{p(p^2-1)}{6}\langle \Psi^{p-2}, \rho_{1,0,l}^3\rangle_{L^2(\mathcal{C})}
+\ve^4\frac{p(p^2-1)(p-2)}{12}\langle \Psi^{p-3}, \rho_{1,0,l}^4\rangle_{L^2(\mathcal{C})}\notag\\
&&+o(\ve^4).
\end{eqnarray}
It follows from \eqref{eq0006}, $\rho_{1,0,l}\in\mathcal{M}^{\perp}$ and the Taylor expansion once more, that
\begin{eqnarray}
C_{a,b,N}^{-1}\|\Psi+\ve\rho_{1,0,l}\|^2_{L^{p+1}(\mathcal{C})}
&=&C_{a,b,N}^{-1}\|\Psi\|^2_{L^{p+1}(\mathcal{C})}+p\ve^2\langle \Psi^{p-1}, \rho_{1,0,l}^2\rangle_{L^2(\mathcal{C})}\notag\\
&&+\frac{p(p-1)\ve^3}{3}\langle \Psi^{p-2}, \rho_{1,0,l}^3\rangle_{L^2(\mathcal{C})}
+\frac{p(p-1)(p-2)\ve^4}{12}\langle \Psi^{p-3}, \rho_{1,0,l}^4\rangle_{L^2(\mathcal{C})}\notag\\
&&-C_{a,b,N}^{\frac{p+1}{p-1}}\frac{(p-1)p^2\ve^4}{4}(\langle \Psi^{p-1}, \rho_{1,0,l}^2\rangle_{L^2(\mathcal{C})})^2+o(\ve^4).\label{eqq2033}
\end{eqnarray}
Then by Proposition~\ref{propq0001}, \eqref{eqq0050} and \eqref{eqq2033}, we will have the following energy estimate:
\begin{eqnarray}\label{eqq3051}
c_{BE}\leq\lambda_*-\frac{\widehat{\mathbb{Z}}_{a,b,N,l}}{\|\rho_{1,0,l}\|^2_{H^1(\mathcal{C})}}\ve^2+o(\ve^2),
\end{eqnarray}
where we denote
\begin{eqnarray*}
\widehat{\mathbb{Z}}_{a,b,N,l}=\frac{p(p-1)(p-2)}{12}\langle \Psi^{p-3}, \rho_{1,0,l}^4\rangle_{L^2(\mathcal{C})}-C_{a,b,N}^{\frac{p+1}{p-1}}\frac{(p-1)p^2}{4}(\langle \Psi^{p-1}, \rho_{1,0,l}^2\rangle_{L^2(\mathcal{C})})^2.
\end{eqnarray*}
Clearly, if we want to derive the desired energy estimate $c_{BE}<\lambda_*$, we need to show that $\mathbb{Z}_{a,b,N,l}>0$ where $p>2$ is necessary.
Recall that $p=\frac{N+2(1+a-b)}{N-2(1+a-b)}$ with $a\leq b<a+1$,  Thus, we must have $2\leq N\leq 5$ for $p>2$.  It follows that $\widehat{\mathbb{Z}}_{a,b,N,l}<0$ for $N\geq6$, which implies that we can not derive the desired estimate $c_{BE}<\lambda_*$ for $N\geq6$ and $a<a_c^*$ with $b_{FS}(a)<b<b_{FS}^*(a)$ any more by the possiblely optimal test functions $u=\Psi+\ve\rho_{1,0,l}$ as $\ve\to0$.

\vskip0.12in

For $2\leq N\leq5$, we know that $p>2$ is equivalent to 
\begin{eqnarray*}
b<b_{FS}^{**}(a):=a-a_c+\frac{N}{3}.
\end{eqnarray*}
It follows from $a<a_c^*$ with $b_{FS}(a)<b<b_{FS}^*(a)$ that $a_c^{**}<a<a_c^*$ where
\begin{eqnarray*}
a_c^{**}=a_c-\frac{2}{\sqrt{5}}\sqrt{N-1}.
\end{eqnarray*}
Moreover,
$b_{FS}^{**}(a)<b_{FS}^*(a)$ for $a<a_c^{***}$ and $b_{FS}^{**}(a)>b_{FS}^*(a)$ for $a>a_c^{***}$ where 
\begin{eqnarray*}
a_c^{***}=a_c-\frac{\sqrt{3}}{3}\sqrt{N-1}.
\end{eqnarray*}
We remark that since $2\leq N\leq 5$, we have $a_c^{***}<a_c^*$.  Since $p$ is decreasing for $b$, we have
\begin{eqnarray}\label{eqq9099}
q_*(a)\leq p\leq2q_*(a)-1\quad\text{for }a_c^{***}\leq a<a_c^*
\end{eqnarray}
and
\begin{eqnarray}\label{eqq9098}
2\leq p\leq2q_*(a)-1\quad\text{for }a_c^{**}\leq a<a_c^{***},
\end{eqnarray}
where
\begin{eqnarray}\label{eqq9097}
q_*(a)=\sqrt{1+\frac{N-1}{(a_c-a)^2}}.
\end{eqnarray}
We also remark that for $a\in[a_c^{**}, a_c^*)$, 
\begin{eqnarray}\label{eqq9095}
2\leq q_*(a)\leq\bigg(\frac{N+2}{N-2}\bigg)_+\quad\text{for }a_c^{***}\leq a<a_c^*
\end{eqnarray}
and
\begin{eqnarray}\label{eqq9094}
\frac{3}{2}\leq q_*(a)\leq2\quad\text{for }a_c^{**}\leq a<a_c^{***}.
\end{eqnarray}
By Proposition~\ref{propq0001} and  \eqref{eq0026}, we have
\begin{eqnarray*}
\langle \Psi^{p-3}, \rho_{1,0,l}^4\rangle_{L^2(\mathcal{C})}=\bigg(\frac{p+1}{2}(a_c-a)^2\bigg)^{\frac{p-3}{p-1}}\int_{\mathbb{S}^{N-1}}\theta_{1,l}^4d\theta\int_{\bbr}(\cosh(\gamma t))^{-2(\frac{p-3}{p-1}+\frac{2\sqrt{\tau_{a,1}}}{\gamma})}dt
\end{eqnarray*}
and
\begin{eqnarray*}
\langle \Psi^{p-1}, \rho_{1,0,l}^2\rangle_{L^2(\mathcal{C})}=\frac{p+1}{2}(a_c-a)^2\int_{\mathbb{S}^{N-1}}\theta_{1,l}^2d\theta\int_{\bbr}(\cosh(\gamma t))^{-2(1+\frac{\sqrt{\tau_{a,1}}}{\gamma})}dt
\end{eqnarray*}
where $\tau_{a,1}$ is given by \eqref{eqq5699}.  Since by symmetry, we have
\begin{eqnarray*}
\int_{\mathbb{S}^{N-1}}\theta_{1,l}^2d\theta=\frac{1}{N}|\mathbb{S}^{N-1}|\quad\text{and}\quad \int_{\mathbb{S}^{N-1}}\theta_{1,l}^4d\theta=\frac{3}{N(N+2)}|\mathbb{S}^{N-1}|,
\end{eqnarray*}
by \eqref{eqq1041} and the explicit formula of $C_{a,b,N}^{-1}$ given by \cite[Corollary~1,3]{DEL2016}, that is, 
\begin{eqnarray*}
C_{a,b,N}^{-1}=\frac{p+1}{2}(a_c-a)^{\frac{p+3}{p+1}}\bigg(\frac{2\sqrt{\pi}\Gamma\bigg(\frac{p+1}{p-1}\bigg)}{(p-1)\Gamma\bigg(\frac{3p+1}{2(p-1)}\bigg)}\bigg)^{\frac{p-1}{p+1}},
\end{eqnarray*}
we have
\begin{eqnarray}
\widehat{\mathbb{Z}}_{a,b,N,l}&=&(a_c-a)^{\frac{p-5}{p-1}}\frac{p(p-2)|\mathbb{S}^{N-1}|}{2N(N+2)}\bigg(\frac{p+1}{2}\bigg)^{\frac{p-3}{p-1}}\notag\\
&&\times\bigg(\mathbb{B}(\frac{p-3}{p-1}+2\frac{\sqrt{\tau_{a,1}}}{\gamma},\frac12)
-\frac{pD_N\mathbb{B}^2(1+\frac{\sqrt{\tau_{a,1}}}{\gamma},\frac12)}{(p-2)\mathbb{B}(\frac{p+1}{p-1},\frac12)}\bigg),\label{eqq9093}
\end{eqnarray}
where
\begin{eqnarray*}
D_N=\frac{(N+2)|\mathbb{S}^{N-1}|}{N}.
\end{eqnarray*}
Since it is well known that
\begin{eqnarray*}
|\mathbb{S}^{N-1}|=\left\{
\aligned&\frac{2\pi^m}{(m-1)!},\quad N=2m,\\
&\frac{2(2\pi)^m}{(2m-1)!!},\quad N=2m+1,
\endaligned
\right.
\end{eqnarray*}
we have
\begin{eqnarray}\label{eqq9096}
D_N=\left\{
\aligned
&4\pi,\quad N=2,\\
&\frac{20\pi}{3},\quad N=3,\\
&3\pi^2,\quad N=4,\\
&\frac{56\pi^2}{15},\quad N=5.
\endaligned
\right.
\end{eqnarray}
Recall that by the definitions of $\gamma$ and $\tau_{a,1}$ given by Proposition~\ref{propq0001} and \eqref{eqq5699}, respectively, we have 
\begin{eqnarray*}
\frac{\sqrt{\tau_{a,1}}}{\gamma}-\frac{2}{p-1}=\frac{2}{p-1}(q_*(a)-1).
\end{eqnarray*} 
Thus, by \eqref{eqq9099} and \eqref{eqq9098},
\begin{eqnarray*}
1\leq\frac{\sqrt{\tau_{a,1}}}{\gamma}-\frac{2}{p-1}\leq2,
\end{eqnarray*}
where we have also used the monotone property of $q_*(a)$.  As that in the computations for the cases~$(i)$ and $(ii)$ in the proof of Proposition~\ref{propq0002}, by the monotone property of the beta function $\mathbb{B}(m,n)$ in terms of $m$ and the equality $\mathbb{B}(m,n)=\frac{m-1}{m-1+n}\mathbb{B}(m-1,n)$,
\begin{eqnarray}
&&\mathbb{B}(\frac{p-3}{p-1}+2\frac{\sqrt{\tau_{a,1}}}{\gamma},\frac12)
-\frac{pD_N\mathbb{B}^2(1+2\frac{\sqrt{\tau_{a,1}}}{\gamma},\frac12)}{(p-2)\mathbb{B}(\frac{p+1}{p-1},\frac12)}\notag\\
&=&\frac{2\frac{\sqrt{\tau_{a,1}}}{\gamma}-\frac{2}{p-1}}{2\frac{\sqrt{\tau_{a,1}}}{\gamma}-\frac{2}{p-1}+\frac12}\mathbb{B}(2\frac{\sqrt{\tau_{a,1}}}{\gamma}-\frac{2}{p-1},\frac12)-\frac{pD_N\mathbb{B}(1+2\frac{\sqrt{\tau_{a,1}}}{\gamma},\frac12)}{(p-2)\frac{\frac{p+1}{p-1}}{\frac{p+1}{p-1}+\frac12}\times\frac{\frac{p+1}{p-1}+1}{\frac{p+1}{p-1}+\frac32}\mathbb{B}(\frac{p+1}{p-1}+2,\frac12)}\mathbb{B}(1+2\frac{\sqrt{\tau_{a,1}}}{\gamma},\frac12)\notag\\
&\leq&4\bigg(\frac{2q_*(a)-1}{8q_*(a)+p-5}-\frac{2D_Np^2(p+1)}{(p-2)(2p+1)(5p-1)}\bigg)\mathbb{B}(1+2\frac{\sqrt{\tau_{a,1}}}{\gamma},\frac12)\notag\\
&=&\frac{4\mathbb{B}(1+2\frac{\sqrt{\tau_{a,1}}}{\gamma},\frac12)\overline{f}_{a,N}(p)}{(8q_*(a)+p-5)(p-2)(2p+1)(5p-1)},\label{eqq9092}
\end{eqnarray}
where
\begin{eqnarray*}
\overline{f}_{a,N}(p)&=&-2D_Np^4-2(4D_N-5)(2q_*(a)-1)p^3-(17(2q_*(a)-1)+2D_N(8q_*(a)-5))p^2\\
&&-7(2q_*(a)-1)p+2(2q_*(a)-1)
\end{eqnarray*}
with $q_*(a)$ given by \eqref{eqq9097}.  Since $q_*(a)\geq\frac{3}{2}$ and $D_N\geq\pi$ by \eqref{eqq9095}-\eqref{eqq9094} and \eqref{eqq9096}, respectively, we have 
\begin{eqnarray*}
\overline{f}_{a,N}(p)&\leq&-2D_Np^4-2(4D_N-5)(2q_*(a)-1)p^3-(17(2q_*(a)-1)+2D_N(8q_*(a)-5))p^2\\
&\leq&-p^2(2D_Np^2+2(4D_N-5)(2q_*(a)-1)p)\\
&<&0
\end{eqnarray*}
for $p>1$.
It follows from \eqref{eqq9093} and \eqref{eqq9092} that $\widehat{\mathbb{Z}}_{a,b,N,l}<0$ for $2\leq N\leq5$ and $a<a_c^*$ with $b_{FS}(a)<b<b_{FS}^*(a)$, which implies that we also can not derive the desired estimate $c_{BE}<\lambda_*$ from \eqref{eqq3051} for $2\leq N\leq5$ and $a<a_c^*$ with $b_{FS}(a)<b<b_{FS}^*(a)$.


\begin{thebibliography}{100}
\bibitem{A1976}
T. Aubin, Probl\`emes isop\'erim\'etriques de Sobolev.  {\it J. Differential Geometry,} {\bf11} (1976), 573--598.

\bibitem{BE1991}
G. Bianchi, H. Egnell, A note on the Sobolev inequality.  {\it J. Funct. Anal.,} {\bf100} (1991), 18--24.

\bibitem{BL1985}
H. Brezis, E. Lieb, Sobolev inequalities with remainder terms. {\it J. Funct. Anal.,} {\bf62} (1985), 73--86.

\bibitem{CKN1984}
L. Caffarelli, R. Kohn, L. Nirenberg, First order interpolation inequalities with weights.  {\it Compos. Math.,} {\bf53} (1984), 259--275.

\bibitem{CW2001}
F. Catrina, Z.-Q. Wang, On the Caffarelli-Kohn-Nirenberg inequalities: sharp constants, existence (and nonexistence), and symmetry of extremal functions. {\it Comm. Pure Appl. Math.,} {\bf54} (2001), 229--258.

\bibitem{CFW2013}
S. Chen, R. L. Frank, T. Weth, Remainder terms in the fractional Sobolev inequality, {\it Indiana Univ. Math. J.,} {\bf62} (2013), 1381--1397.

\bibitem{CC1993}
K. Chou, W. Chu, On the best constant for a weighted Sobolev-Hardy inequality.  {\it J. London Math. Soc.,} {\bf48} (1993), 137--151.

\bibitem{DT2023}
S. Deng, X. Tian, On the stability of Caffarelli-Kohn-Nirenberg inequality in $\bbr^2$, 
preprint, arXiv2308.04111v1 [Math. AP].

\bibitem{DEFFL2022}
J. Dolbeault, M. J. Esteban, A. Figalli, R. L. Frank, M. Loss, Stability for the Sobolev inequality with explicit constants. Preprint, arXiv:2209.08651 [Math. AP].

\bibitem{DELT2009}
J. Dolbeault, M. J. Esteban, M. Loss, and G. Tarantello, On the symmetry of extremals for the Caffarelli-Kohn-Nirenberg inequalities.  {\it Adv. Nonlinear Stud.,} {\bf9} (2009), 713--726.

\bibitem{DEL2012}
J. Dolbeault, M. J. Esteban, M. Loss, Symmetry of extremals of functional inequalities via spectral estimates for linear operators.  {\it J. Math. Phys.,} {\bf53} (2012), article 095204, 18 pp.

\bibitem{DEL2016}
J. Dolbeault, M. J. Esteban, M. Loss, Rigidity versus symmetry breaking via nonlinear flows on cylinders and Euclidean spaces, {\it Invent. math.,} {\bf206} (2016), 397--440.

\bibitem{DET2008}
J. Dolbeault, M. J. Esteban, G. Tarantello, The role of Onofri type inequalities in the symmetry properties of extremals for Caffarelli-Kohn-Nirenberg inequalities, in two space dimensions, {\it Ann. Sc. Norm. Super. Pisa Cl. Sci.,} {\bf5} (2008), 313-341.

\bibitem{DE2022}
J. Dolbeault, M. J. Esteban, Hardy-Littlewood-Sobolev and related inequalities: stability, EMS Press, Berlin, 2022, 247–268.

\bibitem{F2013}
A. Figalli, Stability in geometric and functional inequalities, European Congress of Mathematics, 585--599, Eur. Math. Soc., Zurich, 2013.

\bibitem{FG2021}
A. Figalli, F. Glaudo, On the Sharp Stability of Critical Points of the Sobolev Inequality.  {\it Arch. Rational Mech. Anal.,} {\bf237} (2020), 201--258.

\bibitem{FS2003}
V. Felli, M. Schneider, Perturbation results of critical elliptic equations of Caffarelli-Kohn-Nirenberg type. {\it J. Differential Equations,} {\bf191} (2003), 121--142.

\bibitem{FLW2022}
R. L. Frank, A. Laptev, T. Weidl, Schr\"odinger Operators: Eigenvalues and Lieb-Thirring Inequalities,
In: Cambridge Studies in Advanced Mathematics 200. Cambridge University Press (2022).

\bibitem{FP2023}
R. L. Frank, J. W. Peteranderl, Degenerate stability of the Caffarelli-Kohn-Nirenberg inequality along the Felli-Schneider curve, preprint, arXiv:2308.07917 [math.AP]


\bibitem{F2022}
R. L. Frank, Degenerate stability of some Sobolev inequalities, {\it Ann. Inst. H. Poincare Anal. Non Lineaire,} {\bf 39} (2022), 1459-1484.

\bibitem{F2023}
R. L. Frank, The sharp Sobolev inequality and its stability: an introduction, preprint, arXiv2304.03115 [math.AP].

\bibitem{GLKO1993}
A. Gonzalez-Lopez, N. Kamran, P.J. Olver, Normalizability of one-dimensional quasi-exactly solvable Schro\"dinger operators, {\it Comm. Math. Phys.,} {\bf153} (1993), 117--146.

\bibitem{K2022}
T. Konig, On the sharp constant in the Bianchi-Egnell stability inequality. Preprint,
arXiv:2210.08482

\bibitem{K2023}
T. Konig, Stability for the Sobolev inequality: Existence of a minimizer, preprint, arXiv2211.14185v3 [Math. AP].

\bibitem{K2023-1}
T. Konig, An exceptional property of the one-dimensional Bianchi-Egnell inequality, preprint, arXiv:2308.16794v1 [math.AP]

\bibitem{LL1958}
L.D. Landau, E.M. Lifshitz, Quantum Mechanics: Non-Relativistic Theory, Theoretical Physics, Vol. 3, Pergamon Press Ltd., London, Paris, 1958.

\bibitem{L1983}
E. Lieb, Sharp constants in the Hardy-Littlewood-Sobolev and related inequalities. {\it Ann. of Math. (2),} {\bf118} (1983), 349--374.

\bibitem{LW2004}
C.-S. Lin, Z.-Q. Wang, Symmetry of extremal functions for the Caffarrelli-Kohn-Nirenberg inequalities. {\it Proc. Amer. Math. Soc.,} {\bf132} (2004), 1685--1691.

\bibitem{NU1988}
A.F. Nikiforov, V.B. Uvarov, Special Functions of Mathematical Physics, Birkhäuser Verlag, Basel, 1988, xviii+427 pp.

\bibitem{S2007}
M. Schneider, A priori estimates for the scalar curvature equation on $\mathbb{S}^3$, 
{\it Calc. Var.,} 29 (2007), 521--560.

\bibitem{T1976}
G. Talenti, Best constant in Sobolev inequality.  {\it Ann. Mat. Pura Appl. (4),} {\bf110} (1976), 353--372.

\bibitem{WW2022}
J. Wei, Y. Wu, On the stability of the Caffarelli–Kohn–Nirenberg inequality, {\it Math. Ann.,} {\bf384} (2022), 1509--1546.
\end{thebibliography}
\end{document}